\newtheorem{theorem}{Theorem}[section]
\newtheorem{proposition}[theorem]{Proposition}
\newtheorem{lemma}[theorem]{Lemma}
\theoremstyle{remark}
\newtheorem{definition}[theorem]{Definition}
\numberwithin{equation}{section}
\DeclareMathOperator{\SL}{SL}
\DeclareMathOperator{\Tr}{tr}
\newcommand{\dvector}[1]{{\left(\begin{matrix}#1\end{matrix}\right)}}
\DeclareMathOperator{\dbar}{\overline\partial}
\newcommand{\R}{\mathbb{R}}
\newcommand{\C}{\mathbb{C}}
\newcommand{\N}{\mathbb{N}}
\newcommand{\Z}{\mathbb{Z}}
\begin{document}

\title[Irreducible flat ${\rm SL}(2,\R)$-connections on trivial bundle]{Irreducible flat ${\rm 
SL}(2,\R)$-connections on the trivial holomorphic bundle}

\author[I. Biswas]{Indranil Biswas}

\address{School of Mathematics, Tata Institute of Fundamental Research,
Homi Bhabha Road, Mumbai 400005, India}

\email{indranil@math.tifr.res.in}

\author[S. Dumitrescu]{Sorin Dumitrescu}

\address{Universit\'e C\^ote d'Azur, CNRS, LJAD, France}

\email{dumitres@unice.fr}

\author[S. Heller]{Sebastian Heller}

\address{Institute of Differential Geometry,
Leibniz Universit\"at Hannover,
Welfengarten 1, 30167 Hannover, Germany}

\email{seb.heller@gmail.com}

\subjclass[2020]{34M03, 34M56, 14H15, 53A55}

\keywords{Local system, character variety, holomorphic connection, monodromy.}

\date{\today}

\begin{abstract}
We construct an irreducible holomorphic connection with ${\rm SL}(2,\R)$--monodromy on the trivial
holomorphic vector bundle of rank two over a compact Riemann surface. This answers a question of 
Calsamiglia, Deroin, Heu and Loray in \cite{CDHL}.
\medskip

\noindent
\textsc{R\'esum\'e.}\, Dans cet article nous munissons le fibr\'e vectoriel holomorphe trivial de rang deux au-dessus 
d'une surface de Riemann compacte de genre $g \geq 2$, d'une connexion holomorphe irr\'eductible dont la monodromie 
est contenue dans ${\rm SL}(2,\R)$. Ceci r\'epond \`a une question pos\'ee par Calsamiglia, Deroin, Heu et Loray 
dans \cite{CDHL}.
\end{abstract}

\maketitle

\section{Introduction}\label{sec:intro}

Take a compact connected oriented topological surface $S$ of genus $g$, with $g \geq 2$. There is a natural
bijection between the isomorphism classes of
flat $\rm{SL}(2, \C)$--connections over $S$ and the conjugacy classes of group 
homomorphisms from the fundamental group of $S$ into $\rm{SL}(2, \C)$ (two such homomorphisms are called
conjugate if they differ by an inner automorphism of $\rm{SL}(2, \C)$). This bijection sends a flat connection to
its monodromy representation. When $S$ is equipped with a complex structure, a flat $\rm{SL}(2, \C)$--connection 
on $S$ produces a holomorphic vector bundle of rank two and trivial determinant on the Riemann surface 
defined by the complex structure on $S$; this is because locally constant transition functions producing
the vector bundle are holomorphic. In fact, since a holomorphic connection on a compact Riemann surface
$\Sigma$ is automatically flat, there is a natural bijection between the following two:
\begin{enumerate}
\item isomorphism classes of flat $\rm{SL}(2, \C)$--connections on a compact Riemann surface $\Sigma$;

\item isomorphism classes of pairs of the form $(E,\, D)$, where $E$ is a holomorphic vector bundle of rank two
on $\Sigma$ with $\bigwedge^2 E$ holomorphically trivial, and $D$ is a holomorphic connection on $E$ that induces
the trivial connection on $\bigwedge^2 E$.
\end{enumerate}
The above bijection is a special case of the Riemann--Hilbert correspondence \cite{De}.

Consider the flat $\rm{SL}(2, \C)$--connections on a compact Riemann surface $\Sigma$
satisfying the extra condition that the corresponding holomorphic vector bundle of rank two on $\Sigma$
is holomorphically trivial; they are known as differential ${\mathfrak 
s}{\mathfrak l}(2, \C)$--systems on $\Sigma$ (see \cite{CDHL}), where ${\mathfrak s}{\mathfrak l}(2, \C)$ is the Lie 
algebra of $\rm{SL}(2, \C)$. The differential ${\mathfrak s}{\mathfrak l}(2, \C)$--systems on $\Sigma$ are
parametrized by the complex vector space ${\mathfrak s}{\mathfrak l}(2, \C)\otimes_{\mathbb C}
H^0(\Sigma ,\, K_{\Sigma})$, where $K_{\Sigma}$ is 
the holomorphic cotangent bundle of $\Sigma$. A differential ${\mathfrak s}{\mathfrak l}(2, \C)$--system is called
irreducible if the monodromy representation of the corresponding flat connection is irreducible.
We shall now describe a context in which irreducible differential ${\mathfrak s}{\mathfrak 
l}(2, \C)$--systems appear.

For any cocompact lattice $\Gamma\, \subset\, {\rm SL}(2,{\mathbb C})$, the compact complex threefold ${\rm 
SL}(2,{\mathbb C}) / \Gamma$ does not admit any compact complex hypersurface \cite[p.~239, Theorem 2]{HM}.
While ${\rm SL}(2,{\mathbb C}) / \Gamma$ does not contain a ${\mathbb C}{\mathbb P}^1$, it may contain 
some elliptic curves. A question of Margulis asks whether ${\rm SL}(2,{\mathbb 
C})/\Gamma$ can contain a compact Riemann surface of genus bigger than one. Ghys has the following 
reformulation of Margulis' question: Is there a pair $(\Sigma,\, D)$, where $D$ is a differential ${\mathfrak 
s}{\mathfrak l}(2, \C)$--system on a compact Riemann surface $\Sigma$ of genus at least two, such that the 
image of the monodromy homomorphism $\pi_1(\Sigma)\, \longrightarrow\, \rm{SL}(2, \C)$ for $D$
is a conjugate of $\Gamma$? Existence of such a pair $(\Sigma,\, D)$ is equivalent to the existence of a
holomorphic map $\psi\, :\, \Sigma\, \longrightarrow\,{\rm SL}(2,{\mathbb C}) / \Gamma$ such that the
homomorphism $\psi_*\, :\, \pi_1(\Sigma)\, \longrightarrow\,\pi_1({\rm SL}(2,{\mathbb C}) / \Gamma)$ 
is surjective.

Being inspired by Ghys' strategy, the authors of \cite{CDHL} study the Riemann--Hilbert mapping for 
the irreducible differential ${\mathfrak s}{\mathfrak l}(2, \C)$--systems (see also 
\cite{BD}). Although some (local) results were obtained in \cite{CDHL} and \cite{BD}, the 
question of Ghys is still open. In this direction, it was asked in \cite{CDHL} (p. 161) 
whether discrete or real subgroups of $\rm{SL}(2, \C)$ can be realized as the monodromy of 
some irreducible differential ${\mathfrak s}{\mathfrak l}(2, \C)$--system on some compact 
Riemann surface. Note that if the flat connection on a compact Riemann surface $\Sigma$ corresponding
to a homomorphism $\pi_1(\Sigma)\, \longrightarrow\,
{\rm SL}(2,{\mathbb C})$ with finite image is irreducible, then the underlying holomorphic vector
bundle is stable \cite{NSe}, in particular, the underlying holomorphic vector bundle is not holomorphically trivial.

Our main result (Theorem \ref{Main}) is the construction of a pair $(\Sigma, \, D)$,
where $\Sigma$ is a compact Riemann surface of genus bigger than one and
$D$ is an irreducible differential ${\mathfrak s}{\mathfrak l}(2, \C)$--system on $\Sigma$, such that
the image of the monodromy representation for $D$ is contained in $\SL(2,\R)$.

In Section \ref{sec:prem} we collect preliminaries about moduli spaces and parabolic bundles. In Section 
\ref{sec:4-pun} we construct flat connections $\nabla$ with $\SL(2,\R)$-monodromy on prescribed parabolic bundles 
over a 4-punctured torus, and in Section \ref{sec:maiN} we show how $\nabla$ gives rise to an irreducible 
${\mathfrak s}{\mathfrak l}(2, \C)$--system with real monodromy on certain ramified coverings of the torus,
such that the underlying rank two holomorphic bundle is trivial.

\section{Preliminaries}\label{sec:prem}
\subsection{The Betti moduli space of a 1-punctured torus}\label{ADHS1t}

Let $\Gamma\,=\, {\mathbb Z}+\sqrt{-1}{\mathbb 
Z}\,\subset\, \mathbb C$ be the standard lattice. Define the elliptic curve $T^2\,:=\,\C/\Gamma$, and fix the point 
\begin{equation}\label{eo}
o\,=\,\left[\tfrac{1+\sqrt{-1}}{2}\right]\,\in\, T^2\, .
\end{equation}
For a fixed $\rho\,\in \, [0,\, \tfrac{1}{2}[$, we are interested in the Betti moduli space 
$\mathcal M^\rho_{1,1}$ parametrizing flat $\SL(2,\C)$--connections on the complement $T^2\setminus\{o\}$ whose 
local monodromy around $o$ lies in the conjugacy class of
\begin{equation}\label{locmon}\dvector{ \exp(2\pi \sqrt{-1} \rho) &0 \\ 0& \exp(-2\pi\sqrt{-1} \rho)}\,\in\,
{\rm SL}(2,{\mathbb C})\, .
\end{equation}
This Betti moduli space $\mathcal M^\rho_{1,1}$ does not depend on the
complex structure of $T^2$.
When $\rho\,=\,0$, it is the moduli space of flat $\SL(2,\C)$--connections on $T^2$; in that case
$\mathcal M^\rho_{1,1}$ is a
singular affine variety. However, for every $0\,<\,\rho\,<\,\tfrac{1}{2}$, the space
$\mathcal M^\rho_{1,1}$ is a nonsingular affine variety. We shall recall an explicit description of
this affine variety.

Let $x,\, y,\, z$ be the algebraic functions on $\mathcal M^\rho_{1,1}$ defined as follows: for any
homomorphism
\[h\,\colon\, \pi_1(T^2\setminus\{o\},\, q)\,\longrightarrow\, \SL(2,\C)\]
representing $[h]\,\in \, {\mathcal M}^\rho_{1,1}$, where $q\,=\,[0]$,
\begin{equation}\label{er1}
x([h]) \,=\, \Tr(h(\alpha)),\ y([h]) \,=\, \Tr(h(\beta)),\ z([h])\,=\, \Tr(h(\beta\alpha)),\,
\end{equation}
where $\alpha,\,\beta$ are the standard generators of $\pi_1(T^2\setminus\{o\},\,q)$,
represented by the curves
\begin{equation}\label{ea}
t\,\in\,[0,\,1]\,\longmapsto \,\alpha(t)\,=\, t\ \mod\ \Gamma
\end{equation}
and
\begin{equation}\label{eb}
t\,\in\,[0,
\,1]\,\longmapsto\, \beta(t)\,=\, t\sqrt{-1}\ \mod\ \Gamma\, ,
\end{equation}
respectively.

The variety ${\mathcal M}^\rho_{1,1}$ is defined by the equation
\begin{equation}\label{M11eq}
{\mathcal M}^\rho_{1,1}\,=\,\{(x,y,z)\,\in\,\C^3\,\mid \, x^2+y^2+z^2-xyz-2-2\cos(2\pi \rho)=0\}\, ;
\end{equation}
the details can be found in \cite{Gol}, \cite{Magn}.

\begin{lemma}\label{irr}
Take any $\rho\,\in\,]0,\,\tfrac{1}{2}[$, and consider a representation
\[h\,\colon\, \pi_1(T^2\setminus\{o\},\, q)\,\longrightarrow\, \SL(2,\C)\, ,\]
with $[h]\,\in\, {\mathcal M}^\rho_{1,1}$.
Then, the representation of the free group $F(s,t)$, with generators $s$ and $t$, defined by
\[s\,\longmapsto\, X\,:=\,h(\alpha)h(\alpha) \ \ \text{ and } \ \ t\,\longmapsto\,
Y\,:=\, h(\beta)h(\beta)\]
is reducible if and only if $x([h])y([h])\, =\, 0$,
where $x,\,y$ are the functions in \eqref{er1}.
\end{lemma}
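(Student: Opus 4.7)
The plan is to reduce both implications to the preliminary observation that $\langle A, B\rangle := \langle h(\alpha),\, h(\beta)\rangle$ is already irreducible inside $\SL(2,\C)$. Writing $X = A^2$ and $Y = B^2$, the fundamental group $\pi_1(T^2\setminus\{o\},q)$ is generated by $\alpha$, $\beta$ and a small loop $\gamma$ around $o$ subject only to $[\alpha,\beta] = \gamma$. Consequently $\Tr([A,B]) = 2\cos(2\pi\rho)$, which is different from $2$ because $\rho \in\, ]0,\tfrac{1}{2}[$. Fricke's trace criterion --- $\langle A,B\rangle \subset \SL(2,\C)$ is reducible if and only if $\Tr([A,B]) = 2$ --- then forces $\langle A,B\rangle$ to be irreducible.

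The implication $xy = 0 \Rightarrow \langle X,Y\rangle$ reducible is immediate: if, say, $x = \Tr(A) = 0$, then Cayley--Hamilton yields $A^2 = xA - I = -I$, so $X$ is central and $\langle X,Y\rangle$ is abelian.

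For the converse, assume $xy \neq 0$ and suppose $X, Y$ have a common eigenvector $v$. The technical core of the proof is then the elementary lemma that, for every $M \in \SL(2,\C)$ with $\Tr(M) \neq 0$, every eigenvector of $M^2$ is already an eigenvector of $M$. I would verify this by a short case analysis on the Jordan form of $M$: when $M$ is diagonalizable with eigenvalues $\mu_1, \mu_2$ one has $\mu_1^2 = \mu_2^2$ iff $\mu_1 = -\mu_2$ iff $\Tr(M) = 0$, so under the hypothesis the two eigenlines of $M$ coincide with those of $M^2$; when $M$ is non-diagonalizable, necessarily $M = \varepsilon I + N$ with $\varepsilon \in \{\pm 1\}$ and $N \neq 0$ nilpotent, and $M^2 = I + 2\varepsilon N$ has the single eigenline $\ker N$, coinciding with that of $M$. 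Applied to $A$ and $B$, this forces $v$ to be a common eigenvector of $A$ and $B$, contradicting the irreducibility established in the first paragraph. The only non-routine step is this eigenvector-preservation lemma; once the parabolic case is handled carefully, the rest of the argument reduces to Fricke's criterion together with Cayley--Hamilton.
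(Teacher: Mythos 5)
Your proof is correct, but it takes a genuinely different route from the paper's. The paper quotes Goldman's normal form \eqref{repxy} for $h(\alpha)$ and $h(\beta)$ and reduces the lemma to a direct matrix computation, using the criterion that the group generated by $X$ and $Y$ is reducible if and only if $XY-YX$ is singular; the content is then an explicit evaluation of $\det(XY-YX)$ in those coordinates. You avoid coordinates entirely: you first deduce irreducibility of $\langle h(\alpha),h(\beta)\rangle$ from $\Tr\bigl(h([\alpha,\beta])\bigr)=2\cos(2\pi\rho)\neq 2$ via the easy direction of the Fricke commutator-trace criterion, then get the forward implication from Cayley--Hamilton ($\Tr(A)=0$ forces $A^2=-\mathrm{Id}$, which is central) and the converse from the observation that squaring an element of $\SL(2,\C)$ with nonzero trace creates no new eigenlines. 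This is more structural and explains why $xy=0$ is exactly the dividing line, at the cost of proving the eigenvector-preservation lemma; the paper's version is shorter to state because the normal form is already needed elsewhere. The one point to make explicit in your case analysis is the degenerate diagonalizable sub-case with equal eigenvalues, where $M=\pm\mathrm{Id}$ and the claim is vacuous --- your phrase ``the two eigenlines of $M$'' tacitly assumes the eigenvalues are distinct --- but this costs a line and does not affect correctness.
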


\begin{proof}
It is known that, up to conjugation, we have
\begin{equation}\label{repxy}
h(\alpha)\,=\,\begin{pmatrix} x([h])&1\\-1&0\end{pmatrix}, \ \ h(\beta)
\,=\, \begin{pmatrix} 0&-\zeta\\ \zeta^{-1}& y([h])\end{pmatrix}\, ,
\end{equation}
where 
\begin{equation}\label{zeta1}
\zeta+\zeta^{-1}\,=\,z([h])
\end{equation}
(see \cite{Gol}). The lemma follows
from a direct computation by noting that a representation generated by two $\SL(2,\C)$ matrices
$A,\,B$ is reducible if and only if $AB-BA$ has a non-trivial kernel. If $AB-BA$ has a non-trivial kernel,
then $A$, $B$ and $AB-BA$ lie in a Borel subalgebra of ${\mathfrak s}{\mathfrak l}(2, \C)$.
\end{proof}

\subsection{Parabolic bundles and holomorphic connections}

\subsubsection{Parabolic bundles}\label{sec2.1}

We briefly recall the notion of a parabolic structure, mainly for the purpose of fixing the
notation. We are only 
concerned with the $\SL(2,\C)$--case, so our notation differs from the standard references, e.g., 
\cite{MSe,Biq,B}. Instead, up to a factor 2, we follow the notation of \cite{Pir}; see also \cite{HeHe} for this notation.

Let $V\,\longrightarrow\,\Sigma$ be a holomorphic vector bundle of rank two with trivial determinant bundle over a compact Riemann surface $\Sigma$.
Let $p_1,\, \cdots ,\, p_n\,\in\,\Sigma$ be pairwise distinct $n$ points, and set the divisor
\[D\,=\,p_1+\ldots +p_n\, .\]
For every $k\,\in\,\{1,\,\cdots ,\, n\}$, let $L_k\,\subset\, V_{p_k}$
be a line in the fiber of $V$ at $p_k,$ and also take
\[\rho_k\,\in\, ]0,\, \tfrac{1}{2}[\, .\]

\begin{definition}\label{def:par}
A {\it parabolic structure} on $V$ is given by the data
\[{\mathcal P}\,:=\, (D,\, \{L_1,\,\cdots ,\, L_n\},\, \{\rho_1,\,\cdots ,\, \rho_n\})\, ;\]
we call $\{L_k\}_{k=1}^n$ the quasiparabolic structure, and $\rho_k$ the parabolic weights.
A parabolic bundle over $\Sigma$ is given by a rank two holomorphic vector bundle $V$,
with $\bigwedge^2 V\,=\, {\mathcal O}_\Sigma$, together with a parabolic structure $\mathcal P$ on $V$.
\end{definition}

It should be emphasized that Definition \ref{def:par} is very specific to the case of
parabolic $\SL(2,\C)$--bundles. The parabolic degree of a holomorphic line subbundle
$F\,\subset\, V$ is
\[\text{par-deg}(F)\,:=\, {\rm degree}(F)+\sum_{k=1}^n \rho^F_k\, ,\]
where $\rho^F_k\,= \, \rho_k$ if $F_{p_k}\,=\,L_k$ and $\rho^F_k\,=\,
-\rho_k$ if $F_{p_k}\,\neq\, L_k$.

\begin{definition}
A parabolic bundle $(V,\,\mathcal P)$ is called {\it stable} if
$$\text{par-deg}(F)\, <\, 0$$
for every holomorphic line subbundle $F\,\subset \, V$.
\end{definition}

As before, ${\mathcal P}\,=\,(D\,=\, p_1+\ldots +p_n,\, \{L_1,\,\cdots ,\,L_n\},
\,\{\rho_1,\,\cdots ,\, \rho_n\})$ is a parabolic structure on a rank two holomorphic vector
bundle $V$ of trivial determinant.

A strongly parabolic Higgs field on a parabolic bundle $(V,\,{\mathcal P})$ is a holomorphic section
$$
\Theta\, \in\, H^0(\Sigma,\, \text{End}(V)\otimes K_\Sigma\otimes {\mathcal O}_\Sigma(D))
$$
such that $\text{trace}(\Theta) \,=\, 0$
and $L_k\,\subset\,\text{kernel}(\Theta(p_k))$ for all $1\, \leq\, k\, \leq\, n$.
These two conditions together imply that all the residues of a strongly parabolic Higgs field are nilpotent.

\subsubsection{Deligne extension}\label{Delext}

Take a flat $\SL(2,\C)$--connection $\nabla$ on a holomorphic
vector bundle $E_0$ over $T^2\setminus\{o\}$ (see \eqref{eo}), corresponding to a point
of ${\mathcal M}^\rho_{1,1}$. Then locally around $o\,\in\, T^2$, the connection $\nabla$ is
holomorphically $\SL(2,\C)$--gauge equivalent to the connection
\begin{equation}\label{local-normal-form-connection}
d+\dvector{\rho&0\\0&-\rho}\frac{dw}{w}
\end{equation}
on the trivial holomorphic bundle of rank two, where $w$ is a holomorphic coordinate function on
$T^2$ defined around $o$ with $w(o)\,=\, 0$. Take such a neighborhood $U_o$ of $o$ and
a holomorphic coordinate function $w$. Consider
the trivial holomorphic bundle 
$U_o\times {\mathbb C}^2\, \longrightarrow\, U_o$ equipped with the logarithmic
connection in \eqref{local-normal-form-connection}. Now glue the two holomorphic
vector bundles, namely $U_o\times {\mathbb C}^2$ and $E_0$, over
the common open subset $U_o\setminus\{o\}$ such that the
connection $\nabla\vert_{U_o\setminus\{o\}}$ is taken to the restriction of the logarithmic connection
in \eqref{local-normal-form-connection} to $U_o\setminus\{o\}$. This gluing is holomorphic
because it takes one holomorphic connection to another holomorphic connection. Consequently,
this gluing produces a holomorphic vector bundle
\begin{equation}\label{dV}
V\,\longrightarrow\, T^2
\end{equation}
of rank $2$. Furthermore, the connection $\nabla$ on $E_0\, \longrightarrow\, T^2\setminus\{o\}$
extends to a logarithmic connection on $V$ over $T^2$, because the
meromorphic connection in \eqref{local-normal-form-connection} is a logarithmic connection
on $U_o\times {\mathbb C}^2$. This resulting logarithmic connection on $V$ will
also be denoted by $\nabla$ (see \cite{De} for details).
The logarithmic connection on the exterior product
$$U_o\times \bigwedge\nolimits^2 {\mathbb C}^2\,=\, U_o\times {\mathbb C}\, \longrightarrow\, U_o$$
induced by the logarithmic connection on $U_o\times {\mathbb C}^2\, \longrightarrow\, U_o$
in \eqref{local-normal-form-connection} is actually a regular connection; in fact, it coincides with the trivial
connection on $U_o\times {\mathbb C}$. On the other hand, the connection on $\bigwedge^2E_0\,=\,
{\mathcal O}_{T^2\setminus\{o\}}$ induced by the connection
$\nabla$ on $E_0$ coincides with the trivial connection (recall that $\nabla$ is
a $\SL(2,\C)$--connection on $E_0$). Consequently,
\begin{enumerate}
\item $\bigwedge^2 V\, =\, {\mathcal O}_{T^2}$, where $V$ is the vector bundle in \eqref{dV}, and

\item the logarithmic connection on $\bigwedge^2 V$ induced by the logarithmic connection $\nabla$ on $V$
coincides with the trivial holomorphic connection on ${\mathcal O}_{T^2}$ induced by the de Rham differential $d$.
\end{enumerate} 
In particular, we have $\text{degree}(V)\,=\, 0$.

{}From Atiyah's classification of holomorphic vector bundles over any elliptic curve \cite{At}, the
possible types of the vector bundle $V$ in \eqref{dV} are:
\begin{enumerate}
\item $V\,=\, L\oplus L^{^*}$, with ${\rm degree}(L)\,=\,0$;

\item there is a spin bundle $S$ on $T^2$ (meaning a holomorphic line bundle of order two), such that
$V$ is a nontrivial extension
of $S$ by itself; and

\item $V\,=\, L\oplus L^{^*}$, with ${\rm degree}(L)\,>\,0$.
\end{enumerate}

\begin{lemma}\label{lem3.3}
Consider the vector bundle $V$ in \eqref{dV} for $\tfrac{1}{2}\,>\,\rho\,>\,0$. Then the
last one of the above three cases, as well as the special situation of the first
case where $L\,=\, S$ is a holomorphic line bundle with $S^{\otimes 2}\,=\, {\mathcal O}_{T^2}$, cannot occur.
\end{lemma}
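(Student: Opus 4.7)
The plan is to rule out each of the two forbidden cases by contradicting the fact that $\text{Res}_o(\nabla)$ has the two non-zero eigenvalues $\pm\rho$. The key underlying principle is that logarithmic data with a single prescribed puncture on an elliptic curve is extremely constrained by the residue theorem.

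For case (3), assume $V\,=\, L\oplus L^{^*}$ with $d\,:=\,\deg(L)\,\geq\, 1$. The second fundamental form of $L\,\subset\, V$ with respect to $\nabla$ is an $\mathcal{O}_{T^2}$-linear homomorphism $L\,\longrightarrow\, (V/L)\otimes K_{T^2}(o)\,=\, L^{^*}\otimes\mathcal{O}_{T^2}(o)$, equivalently a global section of the line bundle $L^{-2}\otimes\mathcal{O}_{T^2}(o)$ of degree $-2d+1\,\leq\, -1$. Such a section must vanish, so $L$ is preserved by $\nabla$; hence $\nabla|_L$ is a logarithmic connection on $L$ whose residue at $o$ is the eigenvalue of $\text{Res}_o(\nabla)$ along $L_o\,\subset\, V_o$ and therefore belongs to $\{+\rho,\,-\rho\}$. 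The degree--residue identity for a logarithmic connection with a single pole then forces $d\,=\,-\text{Res}_o(\nabla|_L)\,\in\,\left]-\tfrac{1}{2},\,\tfrac{1}{2}\right[$, contradicting $d\,\geq\, 1$.

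For the special situation $V\,=\, S\oplus S$ with $S^{\otimes 2}\,=\, \mathcal{O}_{T^2}$, I would instead exploit that $\text{End}_0(V)\,\cong\, \mathcal{O}_{T^2}^{\oplus 3}$ is holomorphically trivial. Since $\deg(S)\,=\,0$, the line bundle $S$ carries a holomorphic connection, and its diagonal extension is a holomorphic $\SL(2,\C)$-connection $\nabla_0$ on $V$. The difference $\nabla-\nabla_0$ lies in
\[
H^0(T^2,\, K_{T^2}(o)\otimes\text{End}_0(V))\,\cong\, \mathfrak{sl}(2,\C)\otimes H^0(T^2,\, K_{T^2}(o))\, ,
\]
and the residue theorem on $T^2$ forces every section of $K_{T^2}(o)$ to have vanishing residue at $o$, hence to be holomorphic on all of $T^2$. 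Thus $\nabla-\nabla_0$, and so $\nabla$ itself, is holomorphic at $o$, contradicting that $\text{Res}_o(\nabla)$ has the non-zero eigenvalues $\pm\rho$.

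The arguments become quick once one recognizes the rigidity of logarithmic data at a single puncture on an elliptic curve. The one slightly subtler step is in case (3): the cohomological vanishing $H^0(T^2,\, L^{-2}\otimes\mathcal{O}_{T^2}(o))\,=\, 0$ must be invoked first to establish that $L$ is $\nabla$-invariant, before the residue formula on the line bundle $L$ can be applied.
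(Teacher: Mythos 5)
Your proposal is correct and follows essentially the same route as the paper: for case (3) you use the degree count on the second fundamental form to force $\nabla$-invariance of $L$ and then the degree--residue formula for logarithmic connections on line bundles, exactly as in the paper; for the spin case, your subtraction of an explicit holomorphic connection $\nabla_0$ and the observation that $H^0(T^2, K_{T^2}(o)\otimes\text{End}_0(V))$ consists only of holomorphic forms is just an unwound version of the paper's statement that logarithmic connections form an affine space over $H^0(T^2,\text{End}(V)\otimes K_{T^2}\otimes{\mathcal O}_{T^2}(o))\,=\,H^0(T^2,\text{End}(V)\otimes K_{T^2})$. No gaps.
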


\begin{proof}
First assume that case (3) occurs. So $V\,=\, L\oplus L^{^*}$, with ${\rm degree}(L)\,>\,0$.
We have
$$
\text{degree}(\text{Hom}(L,\, L^{^*})\otimes K_{T^2}\otimes {\mathcal O}_{T^2}(o))\,=\,
1-2\text{degree}(L) \, <\, 0\, ,
$$
where $K_{T^2}$ is the holomorphic cotangent bundle.
So the second fundamental form of $L$ for $\nabla$, which is a holomorphic section of
$\text{Hom}(L,\, L^{^*})\otimes K_{T^2}\otimes {\mathcal O}_{T^2}(o)$, vanishes identically.
Consequently, the logarithmic connection $\nabla$ on $V$ preserves the line subbundle $L$.
Since $L$ admits a logarithmic connection, with residue $r\, \in\, \{\rho,\, -\rho\}$ at $o$, we have
$$
{\rm degree}(L)+ r\,=\, 0
$$
\cite[p.~16, Theorem 3]{Oh}. But this contradicts the fact that
$\rho\,\in\,]0,\, \tfrac{1}{2}[$. So case (3) does not occur.

Next assume that $V\,=\, S\oplus S^{^*}\,=\, S\otimes_{\mathbb C} {\mathbb C}^2$, where
$S$ is a holomorphic line bundle with $S^{\otimes 2}\,=\, {\mathcal O}_{T^2}$. Then $V$ admits a
holomorphic connection, and moreover
$$
H^0(T^2,\, \text{End}(V)\otimes K_{T^2}\otimes {\mathcal O}_{T^2}(o))\,=\,
H^0(T^2,\, \text{End}(V)\otimes K_{T^2})\,=\, \text{End}({\mathbb C}^2)\, .
$$
So $V$ does not admit a logarithmic connection singular exactly over $o\, \in\, T^2$, because
such logarithmic connections form an affine space for the vector space
$H^0(T^2,\, \text{End}(V)\otimes K_{T^2}\otimes {\mathcal O}_{T^2}(o))$.
\end{proof}

\subsubsection{Parabolic structure from a logarithmic connection}\label{sec3.e}

Consider a logarithmic connection $\nabla$ on a holomorphic bundle $V$ of rank two
and with trivial determinant over a compact Riemann surface $\Sigma$.
We assume that $\nabla$ is a $\SL(2,\C)$--connection, meaning
the logarithmic connection on $\bigwedge^2 V$ induced by
$\nabla$ is a holomorphic connection with trivial monodromy; note that
this implies that $\bigwedge^2 V\,=\, {\mathcal O}_\Sigma$. Let $p_1,\,\cdots ,\,p_n\,\in\,\Sigma$ be the singular
points of $\nabla$.
We also assume that the residue \[\text{Res}_{p_k}(\nabla)\,\in\,\text{End}_0(V_{p_k})\]
of the connection $\nabla$ at every point $p_k$, $1\,\leq\, k\, \leq\, n$, has two real eigenvalues
$\pm\rho_k$, with $\rho_k\,\in\,]0,\, \tfrac{1}{2}[.$ For every $1\, \leq\, k\, \leq\, n$, let 
\[L_k\,:=\, \text{Eig}(\text{Res}_{p_k}(\nabla),\, \rho_k)\, \subset\, V_{p_k}\]
be the eigenline of the residue of $\nabla$ at $p_k$ for the eigenvalue $\rho_k$.

The logarithmic connection $\nabla$ gives rise 
to the parabolic structure \[{\mathcal P}\,=\,(D=p_1+\ldots +p_n,\, \{L_1,\, \cdots,\, L_n\},\,
\{\rho_1,\,\cdots ,\, \rho_n\})\, .\]
It is straightforward to check that another such logarithmic connections $\nabla^1$ on $V$
induces the same parabolic structure $P$ if and only if $\nabla -\nabla^1$
is a strongly parabolic Higgs field on $(V,\, {\mathcal P})$, in the sense of Section 
\ref{sec2.1}.

It should be mentioned that in \cite{MSe}
a different local form
of the connection is used (instead of the local form in \eqref{local-normal-form-connection}). In that
case the Deligne extension gives a rank two holomorphic vector bundle $W$ (instead of $V$) with
$\bigwedge^2 W\,=\, {\mathcal O}_{\Sigma}(-D)$ (instead of $\bigwedge^2 V\,=\, {\mathcal O}_{\Sigma}$),
while the parabolic weights at $p_k$ become $\rho_k,\, 1-\rho_k$ (instead of $\rho_k,\, -\rho_k$).

A theorem of Mehta and Seshadri \cite[p.~226, Theorem 4.1(2)]{MSe}, and Biquard \cite[p.~246, Th\'eor\`eme 
2.5]{Biq} says that the above construction of a parabolic bundle $(V,\, {\mathcal P})$ from a logarithmic 
connection $\nabla$ produces a bijection between the stable parabolic bundles (in the sense of Section 
\ref{sec2.1}) on $(\Sigma,\, D)$ and the space of isomorphism classes of irreducible flat ${\rm 
SU}(2)$--connections on the complement $\Sigma\setminus D$. See, for example, \cite[Theorem 3.2.2]{Pir} for 
our specific situation. As a consequence of the above theorem of \cite{MSe} and \cite{Biq}, for every logarithmic 
connection $\nabla$ on $V$ which produces a stable parabolic structure $\mathcal P$, there exists a unique 
strongly parabolic Higgs field $\Theta$ on $(V,\, {\mathcal P})$ such that the holonomy of the flat 
connection $\nabla+\Theta$ is contained in ${\rm SU}(2)$. This flat ${\rm SU}(2)$--connection 
$\nabla+\Theta$ is irreducible, because the parabolic bundle is stable.

\subsection{Abelianization}

Take $o\, \in\, T^2$ as in \eqref{eo}.
In \cite{He3}, representatives $\nabla$ for each gauge class in
$\mathcal M_{1,1}^\rho$ are computed for the special case where $\rho\,=\,\tfrac{1}{6}$ and
$L\,\in \,{\rm Jac}(T^2)\setminus\{S \,\mid\, S^{\otimes 2}\,=\, K_{T^2}\}$. 
We shall show (see Proposition \ref{explicit_coeff}) that for general $\rho$ and
$$L\,\in \,{\rm Jac}(T^2)\setminus\{S \,\mid\, S^{\otimes 2}\,=\, {\mathcal O}_{T^2}\}\, ,$$ the corresponding connection
$\nabla$ is of the form
\begin{equation}\label{abel-connection}
\nabla\,=\,\nabla^{a,\chi,\rho}\,=\,\dvector{\nabla^L &\gamma^+_\chi\\ \gamma^-_\chi &
\nabla^{L^*} }\, ,
\end{equation}
where $a,\, \chi\,\in\,\C$, and
$$\nabla^L\,=\,d+a\cdot dw+\chi\cdot d\overline{w}$$
is a holomorphic connection on $L$ with $\nabla^{L^*}$ being the dual connection on $L^{^*}$;
here $w$ denotes a complex affine coordinate on $T^2\,=\, {\mathbb C}/\Gamma$. The
off--diagonal terms in \eqref{abel-connection} can be described explicitly in terms of the
theta functions as explained below.

Before doing so, we briefly describe both the Jacobian and the rank one de Rham moduli space for $T^2$ in
terms of some useful coordinates. Let $d\,=\,\partial+\overline\partial$
be the decomposition of the de Rham differential $d$ on $T^2$ into its $(1,0)$--part $\partial$ and
$(0,1)$--part $\overline\partial$. It
is well--known that every holomorphic line bundle of degree zero on $T^2$ is given by a
Dolbeault operator
\[\overline{\partial}^\chi\,=\,\overline{\partial} +\chi\cdot d\overline{w}\]
on the $C^\infty$ trivial line bundle $T^2\times {\mathbb C}\,\longrightarrow\,
T^2$, for some $\chi\,\in\,\C$, where 
$w$ is an affine coordinate function on $\C/(\Z+\sqrt{-1}\Z)\,=\,T^2$ (note that 
$d\overline{w}$ does not depend on the choice of the affine function $w$). So the operator
$\overline{\partial}^\chi$ sends a locally defined $C^\infty$ function $f$ to the $(0,\,1)$-form
$\overline{\partial}f +\chi f\cdot d\overline{w}\,=\,\overline{\partial}f +\chi f\cdot \overline{\partial w}$.
Two such differential operators
\[\overline{\partial}^{\chi_1}\ \ \text{ and } \ \ \overline{\partial}^{\chi_2}\]
determine isomorphic holomorphic line bundles if and only if $\overline{\partial}^{\chi_1}$ and 
$\overline{\partial}^{\chi_2}$ are gauge equivalent. They are gauge equivalent if and only 
if
\begin{equation}\label{gauge-lattice}\chi_2-\chi_1\,\in\, \Gamma^*:=\,\pi\Z+\pi \sqrt{-1}\Z\end{equation}

Similarly, flat line bundles over $T^2$ are given by the connection operator
\[d^{a,\chi}\,=\,d+a\cdot dw+\chi\cdot d\overline{w}\]
on the $C^\infty$ trivial line bundle $T^2\times {\mathbb C}\,\longrightarrow\,
T^2$, for some $a,\, \chi\,\in\,\C$. Moreover two connections $d^{a_1,\chi_1}$ and $d^{a_2,\chi_2}$
are isomorphic if and only if
\[(a_2-a_1) + (\chi_2-\chi_1) \,\in\, 2\pi \sqrt{-1} \Z \ \ \text{ and }\ \
(a_2-a_1) - (\chi_2-\chi_1)\,\in\, 2\pi \sqrt{-1} \Z\, .\]

The (shifted) theta function for
$\Gamma\,=\,\Z+\Z\sqrt{-1}$ will be denoted by
$\vartheta$. In other words, $\vartheta$ is 
the unique (up to a multiplicative constant) entire function satisfying $\vartheta(0) \,=\, 0$ and
\[\vartheta(w+ 1) \,=\, \vartheta (w),\,\, \vartheta(w+\sqrt{-1}) \,=\, - \vartheta (w)\exp(-2\pi
\sqrt{-1}w)\, .\]
Then the function \[t_{x}(w-\tfrac{1+\sqrt{-1}}{2}) \,:=\, \frac{\vartheta(w-x)}{\vartheta(w)}\exp(-\pi x(w-\overline{w}))\]
is doubly periodic on $\C\setminus(\tfrac{1+\sqrt{-1}}{2}+\Gamma)$ with respect to $\Gamma$
and satisfies the equation
\[(\dbar-\pi xd\overline{w})t_{x}\,=\,0\, .\]
Thus $t_x$ is a meromorphic section of the holomorphic
bundle $L(\overline{\partial}^{-\pi x})\, :=\,[\overline{\partial}^{-\pi x}]$
(it is the holomorphic line bundle given by the Dolbeault operator $\dbar-\pi xd\overline{w}$). Notice that
for $x\,\notin\,\Gamma$, the section $t_x$ 
has a simple zero at $w\,=\,x$ and a first order pole at $w \,=\, o$ (see \eqref{eo}).
 Moreover, up to scaling by a complex number, this $t_x$ is the unique meromorphic section of
$L(\overline{\partial}^{-\pi x})\, :=\, [\overline{\partial}^{-\pi x}]$ with a simple pole at $o$.

For $\frac{1}{2}\, >\, \rho\, >\, 0$, if $V$ in \eqref{dV} is of the form
$V\,=\, L\oplus L^{^*}$, then
from Lemma \ref{lem3.3} it follows that $\text{degree}(L)\,=\, 0$ and $L$ is
not a spin bundle. In other words,
\[L\,=\, L(\overline{\partial}+\chi\cdot d\overline{w})\]
for some $\chi\,\in\,\mathbb C\setminus\tfrac{1}{2}\Gamma^*$; compare with \eqref{gauge-lattice}.

\begin{proposition}\label{explicit_coeff}
For any $\rho\,\in\, [0,\, \tfrac{1}{2}[$,
take $[\nabla]\,\in\, {\mathcal M}_{1,1}^\rho$ such that its
Deligne extension is given by the holomorphic vector bundle
$V\,=\, L\oplus L^*$, where $L\,=\, L(\overline{\partial}+\chi d\overline{w})$
is a holomorphic line bundle on $T^2$ of degree zero such that $L^{\otimes 2}\, \not=\, {\mathcal O}_{T^2}$.
Set $x\,=\,-\frac{1}{\pi }\chi$, so $x\,\notin\,\tfrac{1}{2}\Gamma.$
Then, there exists $$a\,\in\,\C$$ such that one representative of $[\nabla]$ is given by
\[\nabla^{a,\chi,\rho}\] as in \eqref{abel-connection}, where
the second fundamental forms $\gamma^+_\chi$ and $\gamma^-_\chi$ in \eqref{abel-connection} are given
by the meromorphic $1$--forms
\begin{equation}\label{gammamp}
\gamma^+_\chi([w])\,=\,\rho \tfrac{\vartheta'(0)}{\vartheta(-2x)}t_{2x}(w)dw\ \
\text{ and }\ \
\gamma^-_\chi([w])\,=\,\rho \tfrac{\vartheta'(0)}{\vartheta(2x)}t_{-2x}(w)dw
\end{equation}
with values in the holomorphic line bundles
$L(\dbar+ 2\chi d\overline{w})$ and $L(\dbar- 2\chi d\overline{w})$ respectively.
\end{proposition}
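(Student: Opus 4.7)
The plan is to exploit the holomorphic splitting $V = L \oplus L^*$ provided by the Deligne extension to write $\nabla$ as a block matrix
\[
\nabla \,=\, \dvector{\nabla_{11} & \gamma^+ \\ \gamma^- & \nabla_{22}},
\]
in which $\nabla_{11}$, $\nabla_{22}$ are logarithmic connections on $L$, $L^*$ with at worst a simple pole at $o$, and $\gamma^{\pm}$ are meromorphic sections of $L^{\otimes 2} \otimes K_{T^2}(o)$ and $L^{-\otimes 2} \otimes K_{T^2}(o)$ respectively. The $\SL(2,\C)$-condition (the induced connection on $\det V$ is trivial) forces $\nabla_{22}$ to be dual to $\nabla_{11}$, so only the $(1,1)$-block together with the off-diagonal terms are independent.

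The degree-residue identity of \cite[p.~16, Theorem 3]{Oh}, already used in the proof of Lemma~\ref{lem3.3}, applied to the line bundle $L$ (with $\deg L = 0$ and a single possible simple pole at $o$) shows that the residue of $\nabla_{11}$ at $o$ must vanish. Hence $\nabla_{11}$ is a holomorphic connection on $L$, and in the gauge $L = L(\dbar + \chi\, d\zb)$ every such connection has the form $d + a\, dw + \chi\, d\zb$ for a unique $a \in \C$, producing the diagonal of \eqref{abel-connection}.

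Next I would use Riemann--Roch on the elliptic curve $T^2$ to pin down $\gamma^{\pm}$ up to scale. Since $L^{\otimes 2} \neq \mathcal{O}_{T^2}$ has degree zero, $h^0(T^2, L^{\otimes 2}) = 0$; using the triviality of $K_{T^2}$, Riemann--Roch together with Serre duality yields
\[
h^0\bigl(T^2,\, L^{\otimes 2} \otimes K_{T^2}(o)\bigr) \,=\, 1,
\]
and analogously for $L^{-\otimes 2} \otimes K_{T^2}(o)$. The meromorphic $(1,0)$-forms $t_{2x}(w)\, dw$ and $t_{-2x}(w)\, dw$ generate these two lines (each is holomorphic away from $o$ with a simple pole there), so $\gamma^{\pm}$ must be scalar multiples of the expressions appearing in \eqref{gammamp}.

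Finally, the residue of $\nabla$ at $o$ is purely off-diagonal because $\nabla_{11}$ is holomorphic there, so it has eigenvalues $\pm \rho$ if and only if $\text{Res}_o(\gamma^+)\, \text{Res}_o(\gamma^-) = \rho^2$. A constant diagonal gauge $\text{diag}(\lambda, \lambda^{-1})$ rescales $\gamma^{\pm} \mapsto \lambda^{\pm 2} \gamma^{\pm}$ while preserving the diagonal blocks, so only the product of the two scalars is gauge-invariant, and within its gauge orbit one may select the balanced scalars $\rho\, \vartheta'(0)/\vartheta(\mp 2x)$ of \eqref{gammamp}. The main obstacle is the explicit residue computation verifying that these coefficients work: using the expansion $\vartheta(w) = \vartheta'(0)\, w + O(w^2)$ near $w = 0$ together with the fact that the exponential factor $\exp(-\pi x(w - \zb))$ in the definition of $t_x$ equals $1$ at $w = 0$, one reads off $\text{Res}_o\bigl(t_{\pm 2x}(w)\, dw\bigr) = \vartheta(\mp 2x)/\vartheta'(0)$. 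Multiplying by the prescribed coefficients gives $\text{Res}_o(\gamma^+) = \text{Res}_o(\gamma^-) = \rho$, producing the residue matrix $\dvector{0 & \rho \\ \rho & 0}$ with the required spectrum.
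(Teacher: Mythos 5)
Your argument is correct and follows essentially the same route as the paper's proof: split $\nabla$ with respect to $V=L\oplus L^*$, observe that the diagonal part has vanishing residue (you cite Ohtsuki's degree--residue formula, the paper uses the residue theorem for the meromorphic $1$--form $A$ --- these are equivalent here), identify $\gamma^{\pm}$ as elements of the one--dimensional spaces $H^0(L^{\pm 2}\otimes K_{T^2}(o))$ spanned by $t_{\pm 2x}\,dw$, and normalize by a constant diagonal gauge using the fact that the residue of $\nabla$ at $o$ has determinant $-\rho^2$. Your explicit residue computation for $t_{\pm 2x}\,dw$ is a detail the paper leaves implicit, and it checks out.
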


\begin{proof}
Using Section \ref{Delext} 
we know that there exists a representative $\nabla$ of $[\nabla]$ such that its
$(0,1)$--part ${\overline\partial}^\nabla$ is given by
\[{\overline\partial}^\nabla\,=\,{\overline\partial}+\begin{pmatrix} \chi d\overline{w}&0
\\ 0& -\chi d\overline{w}\end{pmatrix}\, .
\]
The $(1,0)$--part $\partial^\nabla$ is given by
 $\partial^\nabla\,=\,\partial+\Psi$, where $\Psi\,=\,\begin{pmatrix} A & B\\ C& -A \end{pmatrix}$
is an $\text{End}(V)$--valued meromorphic $1$--form
on $T^2$, with respect to the holomorphic structure ${\overline\partial}^\nabla$,
such that $\Psi$ has a simple pole at $o$ and $\Psi$ is holomorphic elsewhere. In particular, $A$ is a
meromorphic $1$--form on $T^2$ with simple pole at $o$ (see \eqref{eo}), and hence by the residue
theorem it is in fact holomorphic, i.e.,
\[A\,=\, adw\]
for some $a\,\in\,\C$. 
Furthermore, $B$ and $C$ are meromorphic $1$--forms with values in the holomorphic bundles
$L(\dbar +2\chi d\overline{w})$ and $L(\dbar -2\chi d\overline{w})$, respectively.
Note that for $$x\,=\, -\frac{1}{\pi }\chi \,\in\,\tfrac{1}{2}\Gamma\, ,$$ the holomorphic line bundle
$L(\dbar +2\chi d\overline{w})$ would be the trivial and
$B$ and $C$ cannot have non-trivial residues at $o$ by the residue theorem. 
The determinant of the residue of $\Psi$ at $o$ is $-\rho^2$
by \eqref{local-normal-form-connection}. Therefore, from the holomorphicity
of $A$ we conclude that the quadratic residue of the meromorphic quadratic differential $BC$ is
\[\text{qres}_o(BC)\,=\,\rho^2\, .\]
From the discussion prior to this proposition it follows
that there is a unique meromorphic section 
of $L(\dbar\pm2\chi d\overline{w})$ with a simple pole at $o$.
Thus, after a possible constant diagonal gauge transformation, from the uniqueness, up to
scaling, of the meromorphic section of $L(\dbar\pm2\chi d\overline{w})$ with simple pole at
$o$, it follows that
\[ B\,=\,\gamma^+_\chi \ \ \text{ and } \ \ C\,=\,\gamma^-_\chi\, ,\]
where $\gamma^+_\chi$ and $\gamma^-_\chi$ are the second fundamental forms 
\eqref{abel-connection}.
This completes the proof.
\end{proof}

\begin{proposition}\label{Pro-stab}
Assume that $\rho\,\in\, ]0,\, \tfrac{1}{2}[$. Take $[\nabla]\, \in\, {\mathcal M}^\rho_{1,1}$
such that the corresponding bundle $V$ in \eqref{dV} is of the form $L\oplus L^{^*}$
(so $L$ is not a spin bundle but its degree is zero by Lemma \ref{lem3.3}).
Then, the rank two parabolic bundle corresponding to $[\nabla]$ (see Section
\ref{sec3.e}) is parabolic stable.
\end{proposition}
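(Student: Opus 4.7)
My plan is to enumerate the holomorphic line subbundles $F\subset V=L\oplus L^{*}$ and verify $\text{par-deg}(F)<0$ for each, using the explicit shape of $\nabla$ supplied by Proposition \ref{explicit_coeff}. The obvious candidates $L$ and $L^*$ are the degree-$0$ ones where the stability inequality is tight; the rest will be easy.

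First I would dispose of the line subbundles of strictly negative degree. Any line subbundle $F\subset V=L\oplus L^*$ has $\deg(F)\leq 0$, because $V$ is semistable of slope $0$ (a direct sum of two degree-zero line bundles). If $\deg(F)\leq -1$, then since $\rho^F_o\in\{\rho,-\rho\}$ with $\rho<\tfrac12$,
\[\text{par-deg}(F)\,=\,\deg(F)+\rho^F_o\,\leq\,-1+\rho\,<\,0\, .\]
So only degree-$0$ line subbundles require closer inspection.

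Next I would identify all degree-$0$ line subbundles. If $F\subset V$ has degree $0$, then composing the inclusion with the two projections gives morphisms $F\to L$ and $F\to L^*$. On the elliptic curve $T^2$, a nonzero morphism between two degree-$0$ line bundles forces them to be isomorphic (a degree-$0$ line bundle on $T^2$ admits a nonzero global section iff it is trivial). If both compositions were nonzero we would get $L\cong F\cong L^*$, contradicting the hypothesis $L^{\otimes 2}\not\cong\mathcal{O}_{T^2}$. Hence exactly one composition is nonzero, and is an isomorphism, so $F$ coincides as a subbundle with either the first summand $L$ or the second summand $L^*$.

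The decisive step is to show that for $F\in\{L,L^*\}$, the fiber $F_o\subset V_o$ differs from the parabolic line $\ell_o$, namely the $\rho$-eigenline of $\operatorname{Res}_o(\nabla)$. By Proposition \ref{explicit_coeff}, in the splitting $V=L\oplus L^*$ one representative of $[\nabla]$ is
\[\nabla\,=\,\begin{pmatrix}\nabla^L&\gamma^+_\chi\\ \gamma^-_\chi&\nabla^{L^*}\end{pmatrix}\, ,\]
whose diagonal entries $\nabla^L$ and $\nabla^{L^*}$ are holomorphic at $o$, while the off-diagonal entries $\gamma^{\pm}_\chi$ have simple poles at $o$. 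Consequently
\[\operatorname{Res}_o(\nabla)\,=\,\begin{pmatrix}0&b\\ c&0\end{pmatrix}\,\in\,\operatorname{End}(V_o)\]
is purely antidiagonal with $bc=\rho^2\neq 0$, forced by $\det\operatorname{Res}_o(\nabla)=-\rho^2$. In particular neither coordinate axis of the splitting is an eigenline of $\operatorname{Res}_o(\nabla)$, so $F_o\neq \ell_o$ for both $F=L$ and $F=L^*$; hence $\rho^F_o=-\rho$ and
\[\text{par-deg}(F)\,=\,0+(-\rho)\,=\,-\rho\,<\,0\, .\]

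The main conceptual step is the classification of degree-$0$ line subbundles, where the hypothesis $L^{\otimes 2}\not\cong\mathcal O_{T^2}$ is used crucially; the transversality of $\ell_o$ to the two summands comes essentially for free from Proposition \ref{explicit_coeff}, because the holomorphic diagonal entries contribute no residue and force $\operatorname{Res}_o(\nabla)$ to be off-antidiagonal with nonzero off-diagonal entries.
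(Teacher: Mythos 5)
Your proof is correct and follows essentially the same route as the paper's: dispose of negative-degree subbundles via $-1+\rho<0$, identify the degree-zero subbundles as exactly the two summands using $L\not\cong L^{*}$ (equivalently $H^{0}(T^2,\mathrm{Hom}(\xi,L\oplus L^{*}))=0$ for any other degree-zero $\xi$), and observe that the residue from Proposition \ref{explicit_coeff} is off-diagonal so neither summand carries the parabolic line, giving parabolic degree $-\rho<0$. Your write-up is slightly more explicit than the paper's (e.g.\ in ruling out exotic embeddings of a subbundle abstractly isomorphic to $L$), but the argument is the same.
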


\begin{proof}
The two holomorphic line bundles $L$ and $L^{^*}$ are not isomorphic, because $L$ is not a spin 
bundle. From this it can be shown that any holomorphic subbundle of degree zero
$$
\xi\, \subset\, V\,=\, L\oplus L^{^*}
$$
is either $L$ or $L^{^*}$. Indeed, if $\xi$ is a degree zero holomorphic line bundle different from
both $L$ and $L^{^*}$, then
$$
H^0(T^2,\, \text{Hom}(\xi,\, V))\,=\, H^0(T^2,\, \text{Hom}(\xi,\, L))\oplus
H^0(T^2,\, \text{Hom}(\xi,\, L^{^*}))\,=\, 0\, .
$$

As the residue in \eqref{abel-connection} is off--diagonal (with respect to the holomorphic decomposition 
$V\,=\,L\oplus L^*$), the above observation implies that
every holomorphic line subbundle $\xi\, \subset\, V$ of degree zero has parabolic degree $-\rho$. 
On the other hand, the parabolic degree of a holomorphic line subbundle of negative degree
is less than or equal to 
$-1+\rho\,<\,0\,.$
Consequently, the parabolic bundle corresponding to $[\nabla]$ is stable.
\end{proof}

\section{Flat connections on the 4-punctured torus}\label{sec:4-pun}

Consider $$\widehat{T}^2\,:=\,\C/(2\Z+2\sqrt{-1}\Z)$$
and the 4--fold covering
\begin{equation}\label{mPi}
\Pi\,\colon\, \widehat{T}^2\,\longrightarrow\, T^2\,:=\,\C/(\Z+\sqrt{-1}\Z)
\end{equation}
produced by the identity map of $\mathbb C$. Let
$$\{p_1,\,p_2,\, p_3,\, p_4\}\,:=\, \Pi^{-1}(o) \, \subset\, \widehat{T}$$
be the preimage of $o\,=\,\left[\tfrac{1+\sqrt{-1}}{2}\right]\, \in\, T^2$ (see \eqref{eo}). Fix
\[\rho\,=\, 0\, ,\]
and consider the corresponding connection $\nabla\,=\,\nabla^{a,\chi,0}$ in \eqref{abel-connection}.
We use $\Pi$ in \eqref{mPi}
to pull back this connection to $\widehat{T}^2$.

Let $$h\, :\, \pi_1(\widehat{T}^2,\, q)\,\longrightarrow \, \text{SL}(2,{\mathbb C})$$
be the monodromy representation for $\Pi^*\nabla^{a,\chi,0}$, where $q\,=\,[0] \in \widehat{T}^2$.

The traces $$T_1(\chi,a)\,=\, \text{tr}(h(\widehat{\alpha}))\ \ \text{ and }\ \
T_2(\chi,a)\,=\, \text{tr}(h(\widehat{\beta}))$$
along
\begin{equation}\label{tab}
\widehat{\alpha}\, ,\,\,\widehat{\beta}\,\in\, \pi_1(\widehat{T}^2\setminus\{p_1,\cdots ,p_4\},\, q)\ \ 
\end{equation}
with representatives
\[\widehat{\alpha}\,\colon\, [0,\, 2]\,\longmapsto\, 2t\ \ \mod\ 2\Gamma \,\in\, \widehat{T}^2 \]
and
\[\widehat{\beta}\,\colon\, [0,\,2]\,\longmapsto\, 2t\sqrt{-1}\ \ \mod\ 2\Gamma \,\in\, \widehat{T}^2 \]
(see Figure \ref{figure1}, Figure \ref{figure2})
are given by
\[T_1(\chi,\,a)\,=\,\exp(-2(a+\chi))+\exp(2(a+\chi))\]
and
\[T_2(\chi,\,a)\,=\,\exp(2\sqrt{-1}(-a+\chi))+\exp(2\sqrt{-1}(a -\chi))\]
respectively, while the local monodromy of $\Pi^*\nabla^{a,\chi, 0}$
around each of $p_1,\,\cdots ,\,p_4$ is trivial, because we have $\rho\,=\,0$.

In the following, set 
\begin{equation}\label{chifix}
\chi\,=\,\frac{\pi}{4}(1-\sqrt{-1})\quad \text{and} \quad a_k\,=\,-\frac{\pi}{4}(1+\sqrt{-1})+k\pi(1+\sqrt{-1}) 
\end{equation}
for all $k\,\in\,\Z.$ Then we have
\begin{equation}\label{T1eq}T_1(\chi,a_k)\,=\,-(\exp(-2k\pi)+\exp(2k\pi))\,\in\, \R\end{equation}
\begin{equation}\label{T2eq}T_2(\chi,a_k)\,=\,-(\exp(-2k\pi)+\exp(2k\pi))\,\in\, \R\, ;\end{equation}
as before, $T_1(\chi,a_k)$ and $T_2(\chi,a_k)$ are the traces of holonomies of
$\Pi^*\nabla^{a_k,\chi,0}$ (see \eqref{abel-connection} and \eqref{mPi}) along $\widehat\alpha$ and
$\widehat\beta$ respectively
(see \eqref{tab}). Moreover, a direct computation shows that
\begin{equation}\label{der0}
(s,\,t)\,\longmapsto\, (\Im(T_1(\chi,\,a_k+s+\sqrt{-1}t)
),\, \Im(T_2(\chi,\,a_k+s+\sqrt{-1}t)
))\end{equation}
is a local diffeomorphism at $(s,\,t)\,=\,(0,\,0)$ by the implicit function theorem.

\begin{theorem}\label{real-mon-hatT2}
Let $k\,\in\,\Z\setminus\{0\}$, $\chi\,=\,\frac{\pi}{4}(1-\sqrt{-1})$ and $a_k
\,=\,-\frac{\pi}{4}(1+\sqrt{-1})+k\pi(1+\sqrt{-1})$. Then
there exists $\epsilon\,>\,0$ such that for each $\rho\,\in\,]0,\,\epsilon[$,
there is a unique number $a\,\in\,\C$ near $a_k$ satisfying the condition that the monodromy of
the flat connection
\[
\Pi^*\nabla^{a,\chi,\rho}
\] on $\widehat{T}^2\setminus\{p_1,\cdots ,p_4\}$ (see \eqref{abel-connection} and \eqref{mPi})
is irreducible and furthermore the image of the monodromy homomorphism is conjugate to a subgroup of $\SL(2,\R)$.
\end{theorem}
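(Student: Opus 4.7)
The plan is to combine the implicit function theorem on ${\mathcal M}^\rho_{1,1}$ with a sign-twist argument propagating the reality of the two traces $T_1,T_2$ to reality of all characters on the pullback subgroup.

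\textbf{Step 1 (finding $a$).} Because the local diffeomorphism property of \eqref{der0} at $(s,t)=(0,0)$ for $\rho=0$ persists smoothly in $\rho$, the implicit function theorem supplies a unique smooth function $a=a(\rho)\in\C$ close to $a_k$ satisfying $\Im T_1(\chi,a(\rho))=\Im T_2(\chi,a(\rho))=0$ for every sufficiently small $\rho>0$. This is the candidate $a$; the uniqueness statement in the theorem will follow once $\SL(2,\R)$-monodromy is shown to force these two real equations (since traces of $\SL(2,\R)$-matrices are real).

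\textbf{Step 2 (trace coordinates).} Let $h$ be the monodromy of $\nabla^{a,\chi,\rho}$ on $T^2\setminus\{o\}$, with trace coordinates $(x,y,z)\in{\mathcal M}^\rho_{1,1}$ from \eqref{er1}. Since $T_1=x^2-2$ and $T_2=y^2-2$, Step~1 forces $x^2,y^2\in\R$. A direct calculation at $(a_k,0)$ gives $x,y\in\sqrt{-1}\R^*$ and $z=-2$, so continuity along the IFT branch produces $x=\sqrt{-1}x_r$, $y=\sqrt{-1}y_r$ with $x_r,y_r\in\R^*$ on a neighbourhood. Substituting into \eqref{M11eq} yields the real quadratic
\[
z^2-(x_r y_r)z-\bigl(x_r^2+y_r^2+2+2\cos(2\pi\rho)\bigr)=0
\]
with strictly positive discriminant near the base point, so the branch continuing $z(0)=-2$ remains real. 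Thus $(x,y,z)\in\sqrt{-1}\R\times\sqrt{-1}\R\times\R$. Since $x_r y_r\neq 0$, Lemma~\ref{irr} gives irreducibility of $\langle h(\alpha)^2,h(\beta)^2\rangle$ and hence of the full pullback monodromy on $\widehat{T}^2\setminus\{p_1,\dots,p_4\}$.

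\textbf{Step 3 (sign twist).} Define $\epsilon\colon\pi_1(T^2\setminus\{o\})\to\{\pm 1\}$ by $\epsilon(\alpha)=\epsilon(\beta)=-1$. Because $\epsilon$ is a homomorphism, $\tilde h(g):=\epsilon(g)h(g)$ is again a representation into $\SL(2,\C)$, and its trace coordinates in the sense of \eqref{er1} are $(-x,-y,z)$. Writing $P_g$ for the universal trace polynomial of $g$ in $x,y,z$, this gives the identity $P_g(-x,-y,z)=\epsilon(g)P_g(x,y,z)$; consequently every monomial of $P_g$ has total $(x,y)$-degree congruent to $\epsilon(g)\pmod 2$. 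For $g\in\ker\epsilon$ this is even, so $P_g(\sqrt{-1}x_r,\sqrt{-1}y_r,z_r)\in\R$ by Step~2.

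\textbf{Step 4 (conclusion).} The image of $\Pi_*$ in $\pi_1(T^2\setminus\{o\})$ consists of elements whose $\alpha$- and $\beta$-exponent sums are both even; their sum is then also even, placing this image inside $\ker\epsilon$. Hence the pullback monodromy is a subrepresentation of $h|_{\ker\epsilon}$, which by Steps~2--3 is irreducible with entirely real character, and is therefore conjugate into either $\SL(2,\R)$ or ${\rm SU}(2)$. Because $T_1\to -(e^{2k\pi}+e^{-2k\pi})<-2$ as $\rho\to 0^+$, one has $|T_1|>2$ for small $\rho$, ruling out ${\rm SU}(2)$. The pullback monodromy is thus irreducible and conjugate into $\SL(2,\R)$, as required. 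The delicate ingredient is the sign-twist argument of Step~3, which is what converts the reality of just two traces $T_1,T_2$ (the two real equations supplied by IFT) into reality of the full character on the pullback subgroup; without it, the count of real equations available against the real parameters in $a$ would be insufficient to pin down the real form.
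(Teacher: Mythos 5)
Your proof is correct and follows the same overall architecture as the paper's: the implicit function theorem applied to \eqref{der0} to pin down a unique $a(\rho)$ near $a_k$, the identity \eqref{tXY} with $X=Y$ to force $x,y\in\sqrt{-1}\,\R^*$ from $T_1,T_2<-2$, reality of $z$ from \eqref{M11eq}, irreducibility from Lemma \ref{irr} via $xy\neq 0$, and exclusion of the compact real form because $|T_1|>2$. The one genuinely different ingredient is your Step 3: where the paper propagates reality to all traces on the pullback subgroup by ``using \eqref{tXY} repeatedly'' (with a pointer to Goldman), you obtain it from the parity identity $P_g(-x,-y,z)=\epsilon(g)P_g(x,y,z)$ for the Fricke trace polynomials, which is cleaner and makes explicit exactly which subgroup inherits a real character (elements with both exponent sums even lie in $\ker\epsilon$). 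Conversely, you invoke as known the dichotomy ``irreducible with real character $\Rightarrow$ conjugate into $\SL(2,\R)$ or ${\rm SU}(2)$,'' which the paper derives from scratch via $C^{-1}\overline h C=h$, $\overline C C=\pm\mathrm{Id}$; since this is classical, that is a fair citation rather than a gap. Two cosmetic points: the middle coefficient of your quadratic for $z$ should be $+x_ry_r$ rather than $-x_ry_r$ (immaterial, since the real coefficients and positive discriminant already force both roots to be real), and $x^2=T_1+2<0$ gives $x\in\sqrt{-1}\,\R^*$ directly, without needing the continuity-of-branch argument.
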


\begin{proof}
Using the fact that the map in \eqref{der0} is a local diffeomorphism, there exists for each sufficiently small
$\rho$ a unique complex number $a$ such that the traces $T_1$ and $T_2$,
of holonomies of $\nabla^{a,\chi,\rho}$ along $\widehat\alpha$ and
$\widehat\beta$ respectively (see \eqref{tab}), are real.
Because $k\,\neq\, 0$, and $\rho$ is small, we obtain from
\eqref{T1eq} and \eqref{T2eq} that these traces satisfy 
\[T_1\,<\,-2\ \ \text{ and }\ \ T_2\,<\,-2\, .\]

Recall the general formula 
\begin{equation}\label{tXY}\text{tr}(X)\text{tr}(Y)\,=\,\text{tr}(XY)+\text{tr}(XY^{-1})\end{equation}
for $X,\,Y\,\in\, \text{SL}(2,\C)$.
Let
\begin{equation}\label{exy}
x\,=\,\text{tr}(h(\alpha))\ \ \text{ and } \ \ y\,=\,\text{tr}(h(\beta))
\end{equation}
be the traces of the monodromy homomorphism $h$ of the connection $\nabla^{a,\chi,\rho}$
on $T^2\setminus\{0\}$ along $\alpha$ and $\beta$ defined in \eqref{ea} and \eqref{eb} respectively.

Applying \eqref{tXY} to
\[X\,=\, h(\alpha)\,=\, Y \ \ \text{( respectively, }\ \ X\,=\, h(\beta)\,=\,Y)\]
we obtain that $x$ (respectively, $y$) in \eqref{exy} must be purely imaginary.
Then it can be checked directly that the trace along any closed curve in the 4--punctured
torus is real. The fact that
\[z\,=\,\text{tr}(h(\alpha\circ\beta))\]
is real is a direct consequence of \eqref{M11eq} combined with the above observation
that $x,\,y\,\in\,\sqrt{-1}\R$. Using \eqref{tXY} repeatedly (compare with \cite{Gol}) it is deduced
that the trace of the monodromy along any closed curve on $\widehat{T}^2$ is real.

For $\rho\,\neq\,0$ sufficiently small, the connection
$\Pi^*\nabla^{a,\chi,\rho}$ on $\widehat{T}^2$ is irreducible as a
consequence of Lemma \ref{irr} --- note that the condition $xy\,\neq\, 0$ follows directly from
the fact that $\rho\,\neq\, 0$ --- applied to $h(\widehat{\alpha})$ and $h(\widehat{\beta})$
(see \eqref{tab}).

We will show that the image of the monodromy homomorphism $h$ is conjugate to a subgroup
of $\text{SL}(2,\R)$.

To prove the above statement, first note that since the monodromy $h$ is irreducible and has all traces real,
the homomorphism $h$ is in fact
conjugate to its complex conjugate representation $\overline h$, meaning there exists $C\,\in\,
\SL(2,\C)$ such that
\[C^{-1}\overline{h} C\,=\, h\, .\]
Applying this equation twice we get that
\[\overline{C}C\,=\, \pm \text{Id}\]
because $h$ is irreducible.
If $$\overline{C}C\,=\, -\text{Id}\, ,$$
a straightforward computation shows that $h$ is conjugate to a unitary representation.
Since the traces of some elements in the image of the monodromy are not contained in $[-2,\, 2]$, we
are led to a contradiction.

Thus, we have $$\overline{C}C\,=\, \text{Id}\, .$$
A direct computation gives that
\[C\,=\,\overline{D}^{-1} D\]
for some $D\,\in\, \SL(2,\C)$. Consequently, we have
\[ DhD^{-1}\,=\, \overline{D}\overline{h} \overline{D}^{-1}\, .\]
Hence the image of the monodromy homomorphism $h$ is conjugate to a subgroup of $\text{SL}(2,\R)$.
\end{proof}

We shall use the following theorem.

\begin{theorem}\label{thetrivcon}
Let $\chi\,=\,\frac{\pi}{4}(1-\sqrt{-1}).$
For every $\rho\,\in\, [0,\, \tfrac{1}{2}[$, there exists
$a^u\,\in\, \C$ such that
\[\Pi^*\nabla^{a^u,\chi,\rho}\]
is a reducible unitary connection satisfying the following condition:
the monodromies of $\Pi^*\nabla^{a^u,\chi,\rho}$ along
$$\widehat\alpha \,\in\, \pi_1(\widehat{T}^2\setminus\{p_1,\cdots ,p_4\},\, q)
\ \ and\ \ \widehat\beta \,\in\,
\pi_1(\widehat{T}^2\setminus\{p_1,\cdots ,p_4\},\, q)
$$
(see \eqref{tab}) are both $-{\rm Id}$. Moreover, the monodromies around the points $p_1,\,\cdots,\,p_4$ are (after simultaneous conjugation) given by
\[\begin{pmatrix} \exp(2\pi\sqrt{-1}\rho)&0\\0&\exp(-2\pi\sqrt{-1}\rho)\end{pmatrix},\, \begin{pmatrix} \exp(-2\pi
\sqrt{-1}\rho)&0\\0&\exp(2\pi\sqrt{-1}\rho)\end{pmatrix}\, ,
\]
\[
\begin{pmatrix} \exp(2\pi\sqrt{-1}\rho)&0\\0&exp(-2\pi\sqrt{-1}\rho)\end{pmatrix}, \,
\begin{pmatrix} \exp(-2\pi\sqrt{-1}\rho)&0\\0&\exp(2\pi\sqrt{-1}\rho)\end{pmatrix}\]
respectively.
\end{theorem}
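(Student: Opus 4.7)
The plan is to handle the case $\rho = 0$ by direct calculation and, for $\rho \in (0, \tfrac 1 2)$, to construct the desired connection first on $\widehat T^2$ and then recognise it as a pullback, exploiting the fact that $\Pi^{*} L$ is a $2$-torsion line bundle on $\widehat T^2$.

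For $\rho = 0$ I take $a^u := a_0 = -\tfrac{\pi}{4}(1+\sqrt{-1}) = -\overline\chi$. Because $\gamma^{\pm}_{\chi}$ is proportional to $\rho$ in Proposition~\ref{explicit_coeff}, $\nabla^{a_0,\chi,0}$ is block-diagonal on $L \oplus L^{*}$; the identity $a_0 = -\overline\chi$ makes each diagonal block a unitary $U(1)$-connection with respect to the standard Hermitian metric on $L(\overline\partial + \chi d\overline w)$. Direct computation gives holonomies $\exp(a_0 + \chi) = -\sqrt{-1}$ along $\alpha$ and $\exp(\sqrt{-1}(a_0 - \chi)) = -\sqrt{-1}$ along $\beta$, so the pullback monodromies along $\widehat\alpha$ and $\widehat\beta$ are both $-\mathrm{Id}$, with trivial local monodromies at the $p_{i}$.

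For $\rho \in (0, \tfrac 1 2)$ the decisive observation is that $2\chi = \tfrac{\pi}{2}(1-\sqrt{-1})$ lies in the gauge lattice $\tfrac{\pi}{2}(\Z + \sqrt{-1}\Z)$ of $\widehat T^{2}$, so $(\Pi^{*} L)^{\otimes 2} \cong \mathcal{O}_{\widehat T^{2}}$ and $\Pi^{*} L \cong \Pi^{*} L^{*}$. Consequently $\Pi^{*} V$ admits splittings into two copies of the same spin bundle beyond the tautological $\Pi^{*} L \oplus \Pi^{*} L^{*}$. Using this flexibility, I produce a unitary flat rank-one connection $\eta$ on a line bundle $\mathcal{N}$ over $\widehat T^{2} \setminus \{p_{1}, \ldots, p_{4}\}$ with monodromies $-1$ along $\widehat\alpha$ and $\widehat\beta$ and $e^{\pm 2\pi\sqrt{-1}\rho}$ around the $p_{i}$ in the prescribed sign pattern. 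Such $\eta$ exists since the product of these local monodromies equals $1 = [\widehat\alpha, \widehat\beta]$ in $U(1)$. Then $\widetilde\nabla := \eta \oplus \eta^{-1}$ on $\mathcal{N} \oplus \mathcal{N}^{-1}$ is a reducible unitary flat $\SL(2, \C)$-connection on $\widehat T^{2}$ with exactly the monodromies claimed by the theorem.

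To realise $\widetilde\nabla$ as $\Pi^{*}\nabla^{a^u, \chi, \rho}$, I will equip $(\mathcal{N} \oplus \mathcal{N}^{-1}, \widetilde\nabla)$ with a $G = (\Z/2)^{2}$-equivariant structure for the deck group of $\Pi$. The translation by $1$ permutes $(p_{1}, p_{2})$ and $(p_{3}, p_{4})$, interchanging the two local weights, so its lift must swap the two summands; the translation by $\sqrt{-1}$ preserves the weight pattern and lifts to an automorphism preserving each summand. The $2$-torsion of $\Pi^{*} L$ (equivalently of $\mathcal{N}$) provides the isomorphism $\mathcal{N} \cong \mathcal{N}^{-1}$ needed to make the swap a bundle isomorphism, and a direct check shows that the two lifts commute. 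The equivariant descent by $G$ then yields a logarithmic connection on $T^{2}$ with Deligne extension $L \oplus L^{*}$, which by Proposition~\ref{explicit_coeff} coincides with $\nabla^{a^u, \chi, \rho}$ for a unique $a^u \in \C$. The main technical point is the coherent construction of these two commuting lifts, which is where the specific choice of $\chi$ and the $2$-torsion of $\Pi^{*} L$ are essential.
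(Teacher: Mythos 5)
Your $\rho=0$ computation and your construction of the target representation $\eta\oplus\eta^{-1}$ are fine, and the representation you write down does agree with what the paper's proof eventually identifies the monodromy to be. But your argument has a genuine gap at the step that carries all the weight: the assertion that the $G$-equivariant descent of $(\mathcal{N}\oplus\mathcal{N}^{-1},\widetilde\nabla)$ to $T^2$ has Deligne extension $L\oplus L^{*}$ with $L=L(\overline\partial+\chi\, d\overline w)$ for the \emph{given} $\chi=\frac{\pi}{4}(1-\sqrt{-1})$. Proposition~\ref{explicit_coeff} takes that as a hypothesis; it does not produce it. What your descent yields (granting it exists) is some irreducible $\mathrm{SU}(2)$-representation $h$ of $\pi_1(T^2\setminus\{o\})$ with $x(h)=y(h)=0$; by Lemma~\ref{lem3.3} its Deligne extension is either $L'\oplus L'^{*}$ for some non-spin degree-zero $L'$, or a non-split extension of a spin bundle by itself, and nothing in your argument determines which bundle occurs, let alone that $L'=L$. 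Computing the Deligne extension of a prescribed representation is the hard direction of Riemann--Hilbert, and it is exactly what the paper's proof is organized to avoid: Proposition~\ref{Pro-stab} gives stability of the parabolic bundle attached to $\nabla^{a,\chi,\rho}$ for every $a$, Mehta--Seshadri/Biquard then \emph{produces} $a^u$ with unitary monodromy on that bundle, strict semistability of the pulled-back parabolic bundle forces reducibility upstairs, and a continuity argument (plus the fact that $L\oplus L^{*}$ with $L^{\otimes 2}\neq\mathcal{O}$ cannot carry a strictly semistable parabolic structure) pins the representation down to the one with $x=y=0$ and $z>0$. Your route needs a substitute for all of this; as written, the existence of $a^u$ --- the actual content of the theorem --- is not established.

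There are also problems inside the descent itself. With the prescribed alternating sign pattern at $(p_1,p_2,p_3,p_4)$, translation by $\sqrt{-1}$ sends $p_1\mapsto p_4$ and $p_2\mapsto p_3$ and therefore \emph{also} interchanges the two local weights, contrary to your claim; both generators of the deck group must be lifted by summand-swapping maps. Moreover, the isomorphism $\mathcal{N}\cong\mathcal{N}^{-1}$ you invoke cannot come from ``$2$-torsion of $\mathcal{N}$'': for $\rho\neq 0$ the flat bundle $\mathcal{N}$ has local monodromies $e^{\pm 2\pi\sqrt{-1}\rho}\neq\pm1$, so $\eta^{2}$ is nontrivial and $\mathcal{N}\not\cong\mathcal{N}^{-1}$ as flat bundles over the identity; the swap can only exist as an isomorphism covering the deck transformation. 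Verifying that the two lifts commute and define a genuine $(\Z/2)^{2}$-action (rather than a $\Z/4$-action or no consistent action) is the descent datum you defer, and it is not automatic. Both points would have to be repaired before the argument closes.
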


\begin{proof}
First, for any $\rho\,\in\, ]0,\, \tfrac{1}{2}[$ and $a\,\in\,\C$, the parabolic bundle on $T^2$ determined
by $\nabla^{a,\chi,\rho}$ with $\chi\,=\,\frac{\pi}{4}(1-\sqrt{-1})$ is
stable (see Proposition \ref{Pro-stab}); this stable parabolic bundle on $T^2$ will be denoted by $W_*$.
Note that all the strongly parabolic Higgs fields on this parabolic bundle are given
by constant multiples scalar of
\[\begin{pmatrix} dw & 0\\ 0&-dw\end{pmatrix}\, .\]
In view of the theorem of Mehta--Seshadri and Biquard (\cite{MSe}, \cite{Biq})
mentioned in Section \ref{sec3.e}, there exists $a^u\,\in\,\C$ such that
\[\nabla^{a^u,\chi,\rho}\] has unitary monodromy on $T^2.$ Then, the flat connection
$\Pi^*\nabla^{ a^u,\chi,\rho}$ on $\widehat{T}^2$ has unitary monodromy as well,
where $\Pi$ is the projection in \eqref{mPi}.

On the other hand, the pulled back parabolic bundle $\Pi^*W_*$ on $\widehat{T}^2$
is strictly semi-stable, because
$\chi\,=\,\frac{\pi}{4}(1-\sqrt{-1})$ and $\widehat{T}^2\,=\,\C/(2\Gamma)$
for the specific lattice $2\Gamma\,=\,2\Z+2\sqrt{-1}\Z$
(it can be proved by a direct computation, but it also follows from \cite[Theorem 3.5
and Section 2.4]{HeHe}), so that the unitary connection $\Pi^*\nabla^{ a^u,\chi,\rho}$ is automatically reducible.

In order to compute the entire monodromy representation, set
$x \,=\, y \,= \,0$, and consider the unique positive solution of $z$ in \eqref{M11eq}. Note, that 
if $\rho\,=\,0,$ then $z\,=\,2$ and $a^u\,=\,-\overline\chi$, with $\chi$ given by 
\eqref{chifix}. Now for a general real $\rho$, using \eqref{repxy} after setting $\zeta\,=\, \exp(\pi\sqrt{-1}\rho)$
there, we see that the representation $h$ of the 
fundamental group of the $1$--punctured torus given by $x(h)\,=\, 0\, =\, y(h)$ and $z(h)\,=\, 
z$ induces a unitary reducible representation of the fundamental group of the 4--punctured 
torus. 

To identify the representation $h$ with the monodromy representation of $\nabla^{ a^u,\chi,\rho}$,
we note that, for $\rho\,<\,\tfrac{1}{4}$ (it suffices to consider this
case for our proof), it can be shown that the parabolic structure on the holomorphic vector bundle
\begin{equation}\label{epb}
L\oplus L^*\,\longrightarrow\, \widehat{T}^2
\end{equation}
cannot be strictly semi-stable if $L^{\otimes 2}$ is not trivial. 
Indeed, the lines giving the quasiparabolic structure are not contained in $L$ or 
$L^*$ by \eqref{abel-connection}. On the other hand, these two subbundles, namely $L$ and $L^{^*}$, are the only 
holomorphic subbundles of degree zero; this follows from the assumption that $L^{\otimes 2}\,\neq\, {\mathcal 
O}_{\widehat{T}^2}$, because $H^0(\widehat{T}^2,\, \text{Hom}(\xi,\, L\oplus L^*))\,=\, 0$, if
$\xi$ is a holomorphic line bundle of degree zero which is different from both $L$ and $L^*$.
Hence the parabolic structure on the holomorphic vector bundle in \eqref{epb}
cannot be strictly semi-stable if $L^{\otimes 2}\, \not=\, {\mathcal O}_{\widehat{T}^2}$.

By continuity of the monodromy representation of 
$\Pi^*\nabla^{a^u,\chi,\rho}$ with respect to the parameters $(a^u,\,\chi,\,\rho)$, the 
representation of $\nabla^{ a^u,\chi,\rho}$ must be the unitary reducible representation 
$h$ with $x(h)\,=\,0\,=\,y(h)$ and positive $z(h)\,=\,z$.
 
Finally, the corresponding monodromies of $\Pi^*\nabla^{ a^u,\chi,\rho}$ can be computed 
using \eqref{repxy}, where $\zeta\,=\, \exp(\pi\sqrt{-1}\rho):$
the monodromies along $\widehat\alpha$ and 
$\widehat\beta$ (see \eqref{tab}) are given by $h(\alpha) h(\alpha)$ and $h(\beta) h(\beta)$
respectively, and both are equal 
to $-\text{Id}$ by \eqref{repxy}, and the monodromies (based at $q\,=\,[0] $) around $p_1,\,\cdots,\,p_4$ are given by
\begin{equation}
\begin{split}
&h(\beta)^{-1}h(\alpha)^{-1}h(\beta)h(\alpha)\,=\,\begin{pmatrix} \exp(2\pi\sqrt{-1}\rho)&0\\0&\exp(-2\pi\sqrt{-1}\rho)\end{pmatrix},\\
&h(\alpha)h(\beta)^{-1}h(\alpha)^{-1}h(\beta)\,=\,\begin{pmatrix} \exp(-2\pi\sqrt{-1}\rho)&0\\0&\exp(2\pi\sqrt{-1}\rho)\end{pmatrix},\\
&h(\beta)h(\alpha)h(\beta)^{-1}h(\alpha)^{-1}\,=\,\begin{pmatrix} \exp(2\pi\sqrt{-1}\rho)&0\\0&\exp(-2\pi\sqrt{-1}\rho)\end{pmatrix},\\
&h(\alpha)^{-1}h(\beta)h(\alpha)h(\beta)^{-1}\,=\,\begin{pmatrix} \exp(-2\pi\sqrt{-1}\rho)&0\\0&\exp(2\pi\sqrt{-1}\rho)\end{pmatrix}
\end{split}
\end{equation}
respectively; compare with Figure \ref{figure1}, Figure \ref{figure2}. \end{proof}

\section{Flat irreducible $\SL(2,\R)$--connections on compact surfaces}\label{sec:maiN}

We assume that 
\[\rho\,=\,\frac{1}{2p}\, ,\]
for some $p\,\in\,\N$ odd, with $\rho$ being small enough so that Theorem \ref{real-mon-hatT2}
is applicable.

The torus $\widehat{T}^2$ in \eqref{mPi} is of square conformal type, and it is given by the algebraic equation
\begin{equation}\label{widehattyz}y^2\,=\, \frac{z^2-1}{z^2+1}\, .\end{equation}
Without loss of any generality, we can assume that the four points \[\{p_1,\,\cdots ,\,p_4\}
\,=\, \Pi^{-1}(\{o\})\, ,\]
where $\Pi$ is the map in \eqref{mPi} and $o$ is the point in \eqref{eo},
are the branch points of the function $z$. With the labelling of the points as in Figure \ref{figure1},
Figure \ref{figure2}, i.e.,
\[p_1\,=\,\left[\frac{1+\sqrt{-1}}{2}\right],\ p_2\,=\,\left[\frac{3+\sqrt{-1}}{2}\right],\
p_3\,=\,\left[\frac{3+3\sqrt{-1}}{2}\right],
\]
\[
p_4\,=\,\left[\frac{1+3\sqrt{-1}}{2}\right]\,\in\,\C/(2\Z+2\sqrt{-1}\Z)\,=\, \widehat{T}^2\, ,
\]
 it can be shown (for example using the Weierstrass $\wp$-function) that the $(y,\, z)$ coordinates of $p_1,\,\cdots ,\, p_4$ and $q\,=\,[0]$ can be 
 chosen to be
\begin{equation}\label{yz-coord}p_1\,=\,(0,\,1), \ \ p_2\,=\,(\infty,\,\sqrt{-1}),\ \ p_3\,=\,(0,\,-1), \ \ 
p_4\,=\,(\infty,\,-\sqrt{-1}) , \ \ 
q\,=\,(\sqrt{-1},\,0)\, .\end{equation}

Define the compact Riemann surface $\Sigma$ by the algebraic equation
\begin{equation}\label{sigmayz}
x^{2p}\,=\,\frac{z^2-1}{z^2+1}\, .
\end{equation}
Consider the $p$--fold covering
\begin{equation}\label{zp}
\Phi_p\,\colon\, \Sigma\,\longrightarrow\, \widehat{T}^2\, ,\ \ (x,\, z)
\, \longmapsto \, (x^p,\, z)\, ,
\end{equation}
which is totally branched over $p_1,\, \cdots,\, p_4$. Denote the inverse image
$\Phi^{-1}_p(p_i)$, $1\, \leq\, i\, \leq\, 4$, by $P_i$ (see Figure \ref{figure3}).

For a connection $\nabla^A$ (respectively, $\nabla^B$) on a vector bundle $A$
(respectively, $B$), the induced connection $(\nabla^A\otimes\text{Id}_B)\oplus
(\text{Id}_A\otimes\nabla^B)$ on $A\otimes B$ will be denoted by $\nabla^A\otimes\nabla^B$
for notational convenience.

There are holomorphic line bundles 
\[S\,\longrightarrow\, \Sigma\]
of degree $-2$ such that
\[S\otimes S\,=\,{\mathcal O}_\Sigma(-P_1-P_2-P_3-P_4)\, .\]
For every such $S$, there is a unique logarithmic connection $\nabla^S$ on $S$ with the property that
\[(\nabla^S\otimes\nabla^S) (s_{-P_1-P_2-P_3-P_4})\,=\,0\, ,\]
where $s_{-P_1-P_2-P_3-P_4}$ is the meromorphic section of
${\mathcal O}_\Sigma(-P_1-P_2-P_3-P_4)$ given by the constant function $1$ on $\Sigma$
(this section has simple poles at $P_1,\,\cdots,\,P_4$). The residue of $\nabla^S$ at $P_j$,
$1\, \leq\, j\, \leq\, 4$, is $\frac{1}{2}$.
Observe that the monodromy representation of $\nabla^S$ takes values in $\Z/2\Z.$
Also, note that $(S,\,\nabla^S)$ is unique up to tensoring with an order
two holomorphic line bundle $\xi$ equipped with the (unique) canonical connection that induces
the trivial connection on $\xi\otimes\xi$.

\begin{lemma}\label{trivialmon}
For given $\rho\,=\,\tfrac{1}{2p}$ and $\Sigma$ (see
\eqref{sigmayz}), consider $a^u$ and $\chi$ as in Theorem \ref{thetrivcon}.
There exists a unique pair $(S,\,\nabla^S)$ such that the monodromy of the connection
\[\nabla^S\otimes (\Pi\circ\Phi_p)^*\nabla^{a^u,\chi,\rho}\]
is trivial (see \eqref{abel-connection}, \eqref{mPi} and \eqref{zp}).
\end{lemma}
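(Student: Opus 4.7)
The plan is to reduce the construction of $(S,\nabla^S)$ to a $\Z/2\Z$--cohomological computation. By Theorem~\ref{thetrivcon}, after simultaneous conjugation, the monodromy of $\Pi^*\nabla^{a^u,\chi,\rho}$ is contained in the diagonal torus of $\SL(2,\C)$, so it has the form $\gamma\mapsto\dvector{\lambda(\gamma)&0\\0&\lambda(\gamma)^{-1}}$ for a character $\lambda\colon\pi_1(\widehat{T}^2\setminus\{p_1,\cdots,p_4\})\to\C^*$ taking the values $\lambda(\widehat\alpha)=\lambda(\widehat\beta)=-1$ and $\lambda(\gamma_j)=\exp(\epsilon_j\cdot 2\pi\sqrt{-1}\rho)$ with the sign pattern $(\epsilon_1,\epsilon_2,\epsilon_3,\epsilon_4)=(+,-,+,-)$ around $p_1,\ldots,p_4$. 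On the other hand, since $(\nabla^S)^{\otimes 2}$ is the trivial connection on $S^{\otimes 2}={\mathcal O}_\Sigma(-P_1-\cdots-P_4)$, the monodromy $\sigma$ of $\nabla^S$ is $\{\pm 1\}$--valued, and the residue $\tfrac{1}{2}$ at each $P_j$ forces $\sigma$ to be $-1$ around each $P_j$.

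The key observation is that the pullback $\Phi_p^*\lambda$ is also $\{\pm 1\}$--valued on $\pi_1(\Sigma\setminus\{P_1,\cdots,P_4\})$. Indeed, the cyclic $p$--fold cover $\Phi_p$ is obtained by extracting a $p$--th root of the meromorphic function $y$ on $\widehat{T}^2$, whose divisor by \eqref{yz-coord} is $(p_1+p_3)-(p_2+p_4)$. Consequently, the image of $H_1(\Sigma\setminus\{P_j\})$ in $H_1(\widehat{T}^2\setminus\{p_i\})$ is the kernel of the $\Z/p\Z$--character sending $\widehat\alpha,\widehat\beta$ to $0$ and $\gamma_j$ to its winding number $(+1,-1,+1,-1)$, whose sign pattern matches that of $\lambda$. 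Writing an element of this kernel as $\delta=a\widehat\alpha+b\widehat\beta+\sum c_j\gamma_j$ with $\sum c_j=0$ and $c_1-c_2+c_3-c_4=kp$, and using the crucial identity $p\rho=\tfrac{1}{2}$, one computes
\[
\lambda(\delta)=(-1)^{a+b}\exp\bigl(2\pi\sqrt{-1}\rho(c_1-c_2+c_3-c_4)\bigr)=(-1)^{a+b}\exp(\pi\sqrt{-1}k)=(-1)^{a+b+k}\in\{\pm 1\}.
\]
Moreover, the local monodromy $(\exp(\pm 2\pi\sqrt{-1}\rho))^p=-1$ of $\Phi_p^*\lambda$ around each $P_j$ matches that of $\sigma$.

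Hence the quotient $\sigma\cdot(\Phi_p^*\lambda)^{-1}$ is a $\Z/2\Z$--valued character of $\pi_1(\Sigma\setminus\{P_1,\cdots,P_4\})$ that is trivial around every $P_j$, and therefore descends to a class $\eta\in H^1(\Sigma,\Z/2\Z)$. The condition that $\nabla^S\otimes(\Pi\circ\Phi_p)^*\nabla^{a^u,\chi,\rho}$ have trivial monodromy is precisely $\eta=0$ (the two conditions $\sigma\cdot\Phi_p^*\lambda=1$ and $\sigma\cdot\Phi_p^*\lambda^{-1}=1$ coincide, since $\Phi_p^*\lambda\in\{\pm 1\}$). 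The remaining ambiguity in $(S,\nabla^S)$ is the tensor product by an order--two holomorphic line bundle $\xi\in\mathrm{Pic}(\Sigma)[2]$ equipped with its canonical flat connection $\nabla^\xi$, and the natural assignment sending $\xi$ to the monodromy of $\nabla^\xi$ is a group isomorphism $\mathrm{Pic}(\Sigma)[2]\,\xrightarrow{\sim}\,H^1(\Sigma,\Z/2\Z)$. Thus there is a unique such $\xi$ with prescribed monodromy $\eta$, and replacing $(S,\nabla^S)$ by $(S\otimes\xi,\nabla^S\otimes\nabla^\xi)$ produces the unique pair satisfying the conclusion of the lemma.

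The principal technical step is the abelianization argument showing $\Phi_p^*\lambda\in\{\pm 1\}$: it relies essentially on both the identity $p\rho=\tfrac{1}{2}$ and the alignment between the sign pattern of $\lambda$ coming from Theorem~\ref{thetrivcon} and that of the $\Z/p\Z$--character defining $\Phi_p$, which in turn comes from the zero/pole structure of $y$. Once this is verified, the remainder is a formal consequence of the standard identification $\mathrm{Pic}(\Sigma)[2]\cong H^1(\Sigma,\Z/2\Z)$.
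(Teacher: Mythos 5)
Your proof is correct and follows essentially the same route as the paper: both arguments rest on the explicit diagonal reducible monodromy from Theorem~\ref{thetrivcon}, on the matching sign patterns of that monodromy and of the $\Z/p\Z$--character \eqref{mmon} defining $\Phi_p$, and on the identity $p\rho=\tfrac{1}{2}$, which together force the pulled-back abelian monodromy to become $\{\pm 1\}$--valued on $\Sigma$ so that it can be cancelled by the $\Z/2\Z$--monodromy of a suitable $\nabla^S$ via $\mathrm{Pic}(\Sigma)[2]\cong H^1(\Sigma,\Z/2\Z)$. Your direct evaluation of $\lambda$ on $\ker m$ replaces the paper's factorization of the monodromy through the auxiliary representation $u\circ m$ and makes the uniqueness explicit; the only blemish is the normalization $\sum_j c_j=0$, which is not always achievable in $H_1$ but is also unnecessary, since $c_1-c_2+c_3-c_4$ is unchanged under adding a multiple of $(1,1,1,1)$.
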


\begin{proof}
Since $p$ is odd,
$\rho\,=\, \tfrac{1}{2p}$, and $\Phi_p$ in \eqref{zp} is a totally branched covering,
the local monodromies of \[(\Pi\circ\Phi_p)^*\nabla^{a^u,\chi,\rho}\] around the points of
$P_i$, $1\,\leq\, i\, \leq\, 4$, are all $-\text{Id}.$ 

The totally branched covering $\Sigma\,\longrightarrow\,\mathbb CP^1$
in \eqref{sigmayz} is determined by its monodromy representation
\[M\,\colon\, \pi_1({\mathbb C}P^1\setminus\{\pm 1,\,\pm\sqrt{-1}\}, \, 0)\,\longrightarrow\, \mathcal S_{2p}\]
into the permutation group $\mathcal S_{2p}$ of the $2p$ points over $z\,=\,0$ which we label by
\[x\,\in\,\left\{\sqrt{-1},\, \sqrt{-1} \exp\left(\frac{\pi\sqrt{-1}}{p}\right),\, \cdots,\,
\sqrt{-1} \exp\left(\frac{\pi\sqrt{-1}j}{p}\right),\, \cdots,\, \sqrt{-1}
\exp\left(\frac{\pi\sqrt{-1}(2p-1)}{p}\right)\right\}\, . \]
The monodromy representation of the cyclic covering $\Sigma\,\longrightarrow\,\mathbb CP^1$
in \eqref{sigmayz} is
abelian, and the local monodromies around the 4 punctures are given by
\begin{equation}\label{Mmon}M_{1}\,=\, M_{-1} \,= \, \exp(\frac{\pi\sqrt{-1}}{p}),\ M_{\sqrt{-1}}\,=\, M_{-\sqrt{-1}}\,=\,\exp(-\frac{\pi\sqrt{-1}}{p})\, .
\end{equation}
The later can be computed via the logarithmic monodromy of $\log x$ by integrating
\[\frac{1}{2p}\frac{dx^{2p}}{x^{2p}}\,=\,\frac{1}{2p} \frac{d\tfrac{z^2-1}{z^2+1}}{\tfrac{z^2-1}{z^2+1}}\]
using the residue theorem.

The $p$--fold cyclic covering $\Phi_p$ in \eqref{zp}
is also determined by its monodromy representation
\[m\,\colon\, \pi_1(\widehat{T}^2\setminus\{p_1,\cdots,p_4\},\, [0])\,\longrightarrow\, \mathcal S_p\, .\]
As in \eqref{yz-coord}, $[0]\,\in\,{\widehat T}^2\,=\,\C/(2\Z+2\sqrt{-1}\Z)$ is a point lying over
$z\,=\,0$ with respect to \eqref{widehattyz}.
Again, the image
$m(\pi_1(\widehat{T}^2\setminus\{p_1,\cdots,p_4\},\, [0]))$ is abelian, and we claim that it is given by 
\begin{equation}\label{mmon}m_{p_1}\,=\, m_{p_3}\,=\, \exp\left(\frac{2\pi\sqrt{-1}}{p}\right),\,
m_{p_2}\,=\, m_{p_4}\,=\ \exp\left(-\frac{2\pi
\sqrt{-1}}{p}\right),\,
m_{\widehat\alpha}\,=\,1,\,m_{\widehat\beta}\,=\,1\, ;\end{equation}
here $m_{p_k}$ are the local monodromies around $p_k$.

The above claim simply follows by describing closed loops on 
the 4-punctured torus as special closed loops on the 4-punctured sphere and using \eqref{Mmon}.

Consider the unitary abelian monodromy representation
$$
R\, :\, \pi_1(\widehat{T}^2\setminus\{p_1,\,\cdots,\,p_4\},\, [0])\, \longrightarrow\, \mathrm{SU}(2)
$$ of the connection $\Pi^*\nabla^{a^u,\chi,\rho}$
on $\widehat{T}^2\setminus\{p_1,\,\cdots,\,p_4\}$ (see \eqref{abel-connection} and \eqref{mPi}). Using the
diagonal representation
\[u\,\colon\,\Z/p\Z\,\longrightarrow\,\mathrm{SU}(2),\ \ \exp\left(\frac{2\pi\sqrt{-1}}{p}\right)\,\longmapsto\,
\begin{pmatrix} \exp(\frac{2\pi\sqrt{-1}(k+1)}{p})&0\\0 &\exp(\frac{-2\pi\sqrt{-1}(k+1)}{p})\end{pmatrix}\] 
it follows from Theorem \ref{thetrivcon} and \eqref{mmon} that the two homomorphisms $u\circ m$ and $R$
from $\pi_1(T^2\setminus\{p_1,\,\cdots,\,p_4\},\, [0])$ to $\mathrm{SU}(2)$
differ only by a $\Z/2\Z$-representation
(with values in $\{\pm\text{Id}\}\subset\mathrm{SU}(2)$).
Note that the local monodromies of this $\Z/2\Z$-representation are $-\text{Id}$, and, as $p$ is odd,
the same holds for the corresponding $\Z/2\Z$-representation $\mu$ of the fundamental group
of the 4-punctured covering $\Sigma\setminus\Phi_p^{-1}\{p_1,\,\cdots,\,p_4\}.$

The spin bundle $S$ is then chosen to give the aforementioned $\Z/2\Z$-representation $\mu$ of the
fundamental group of the 4-punctured
surface $\Sigma\setminus \Phi_p^{-1}\{p_1,\,\cdots,\,p_4\}$. Finally, the lemma then follows from the fact
that the representation $m$ induces the trivial representation on $\Sigma\setminus \Phi_p^{-1}
\{p_1,\cdots,p_4\}$ by the standard property of the monodromy on a covering that it is the pullback of the
monodromy.
\end{proof}

Henceforth, we always assume that
$$
\rho\,=\,\tfrac{1}{2p}\, .
$$

The connection in Lemma \ref{trivialmon}
\[\nabla^S\otimes (\Pi\circ\Phi_p)^*\nabla^{a^u,\chi,\rho}\,=\,
\nabla^S\otimes (\Pi\circ\Phi_p)^*\nabla^{a^u,\chi,\tfrac{1}{2p}}\]
is defined on the vector bundle
\[S\otimes (L\oplus L^*)\, \longrightarrow\, \Sigma\, ,\]
where $L$ is the pull-back, by $\Pi\circ\Phi_p$, of the $C^\infty$ trivial line bundle
$T^2\times{\mathbb C}\, \longrightarrow \,T^2$ equipped with
Dolbeault operator \[\overline{\partial}+\chi d\overline{w}\,=\,
\overline{\partial}+\chi\cdot\overline{\partial w}\, .\]
For each $1\, \leq\,i\, \leq\, 4$, 
the residues of the connection $\nabla^S\otimes (\Pi\circ\Phi_p)^*\nabla^{a^u,\chi,\rho}$
at the point of $P_i\,=\, \Phi^{-1}_p(p_i)$ is
\begin{equation}\label{rescon}
\tfrac{1}{2}\begin{pmatrix} 1&-1\\-1&1\end{pmatrix}\end{equation}
with respect to a suitable frame at the points $P_i$ compatible with the decomposition
$S\otimes (L\oplus L^*)\, =\, (S\otimes L)\oplus (S\otimes L^*)$;
compare with Proposition \ref{explicit_coeff} and its proof.

As in \cite[\S~3]{He3}, there exists a holomorphic
rank two vector bundle $V$ on $\Sigma$ with trivial determinant, equipped with a holomorphic connection
$D$, together with a holomorphic bundle map
\begin{equation}\label{df}
F\,\colon\, S\otimes (L\oplus L^*)\, \longrightarrow\, V
\end{equation}
which is an isomorphism away from $P_1,\,\cdots ,\, P_4$,
such that
\begin{equation}\label{Ddef}\nabla^S\otimes (\Pi\circ\Phi_p)^*\nabla^{a^u,\chi,\rho}\,=\,F^{-1}\circ D\circ F\, .\end{equation}
From Lemma \ref{trivialmon} we know that $(V,\, D)$ is trivial.

\begin{lemma}\label{Lemired}
Assume $p\,\geq\, 3.$ Consider the strongly parabolic Higgs field
\[\Psi\,=\, \begin{pmatrix} dw&0\\0&-dw\end{pmatrix}\]
with respect to the parabolic structure induced by $\nabla^{a^u,\chi,\rho}$.
Then, 
\[\Theta\,= \, F\circ(\Pi\circ\Phi_p)^*\Psi\circ F^{-1}\]
is a holomorphic Higgs field on the trivial holomorphic vector bundle $(V,\, D^{0,1})
\,=\, (V,\, D'')$ (here the Dolbeault
operator for the trivial holomorphic structure is denoted by $D''$).
\end{lemma}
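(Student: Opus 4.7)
The plan is to verify holomorphicity of $\Theta$ by a local calculation near each of the four points $P_1,\,\ldots,\,P_4$, since $F$ is a holomorphic bundle isomorphism elsewhere. Away from these points, $\Theta = F \circ (\Pi\circ\Phi_p)^*\Psi \circ F^{-1}$ is automatically a holomorphic section of $\text{End}(V)\otimes K_\Sigma$. The task is therefore to check that $\Theta$ extends holomorphically across each $P_i$.

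Fix $P_i$ and choose a local holomorphic coordinate $u$ on $\Sigma$ at $P_i$ with $u(P_i)=0$. Since $\Pi$ is unramified and $\Phi_p$ is totally ramified of order $p$ over $p_i$, we have $(\Pi\circ\Phi_p)^* w = u^p$ up to a unit, so the holomorphic $1$-form $\omega := (\Pi\circ\Phi_p)^*dw$ vanishes to order exactly $p-1$ at $P_i$. In the pulled-back frame of $L\oplus L^*$, the pulled-back Higgs field is $(\Pi\circ\Phi_p)^*\Psi = \begin{pmatrix} \omega & 0 \\ 0 & -\omega \end{pmatrix}$, which is holomorphic on all of $\Sigma$ (tensoring with $S$ does not affect this).

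The heart of the argument is identifying the local form of $F$. By \eqref{rescon}, the residue at $P_i$ of the source connection $\nabla^S \otimes (\Pi\circ\Phi_p)^*\nabla^{a^u,\chi,\rho}$, in the frame of $(S\otimes L)\oplus (S\otimes L^*)$, equals $\tfrac{1}{2}\begin{pmatrix} 1 & -1 \\ -1 & 1 \end{pmatrix}$, with eigenvalues $0$ and $1$ and eigenvectors $(1,1)^T$, $(1,-1)^T$. Because the target connection $D$ on $V$ is nonsingular and $F$ intertwines the two connections via \eqref{Ddef}, for a suitable local trivialization of $V$ near $P_i$ the map $F$ takes the local form $\begin{pmatrix} 1 & 0 \\ 0 & u \end{pmatrix}$ in the eigenbasis of the residue (this is the standard elementary Hecke modification that absorbs the residue eigenvalue $1$). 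Conjugating $(\Pi\circ\Phi_p)^*\Psi$ by the diagonalizing matrix $\begin{pmatrix} 1 & 1 \\ 1 & -1 \end{pmatrix}$ gives $\begin{pmatrix} 0 & \omega \\ \omega & 0 \end{pmatrix}$ in the eigenbasis, and a direct computation then yields
\[
\Theta \;=\; \begin{pmatrix} 1 & 0 \\ 0 & u \end{pmatrix}\begin{pmatrix} 0 & \omega \\ \omega & 0 \end{pmatrix}\begin{pmatrix} 1 & 0 \\ 0 & u^{-1} \end{pmatrix} \;=\; \begin{pmatrix} 0 & \omega/u \\ u\,\omega & 0 \end{pmatrix}.
\]
Since $\omega$ vanishes to order $p-1\geq 2$ at $P_i$ (using $p\geq 3$), the entry $\omega/u$ is holomorphic at $P_i$ of order $p-2$, and $u\,\omega$ is trivially holomorphic. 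Hence $\Theta$ extends holomorphically across each $P_i$. Finally, since conjugation preserves trace, $\Theta$ is trace-free, so it is a holomorphic $\text{End}_0(V)\otimes K_\Sigma$-valued Higgs field on the trivial bundle $(V, D'')$.

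The main obstacle I anticipate is pinning down the precise local normal form of $F$. The argument uses that $(V, D)$ is regular while the source connection has logarithmic residue with integer eigenvalue difference equal to $1$ at each $P_i$; by the standard theory of elementary modifications of logarithmic connections on rank-two bundles, $F$ is then forced to have the diagonal form $\text{diag}(1, u)$ in the residue eigenbasis (for a suitable local trivialization of $V$). Once this local normal form is established, the holomorphic extension follows at once from the matrix computation above combined with the $(p-1)$-fold vanishing of $\omega$ at $P_i$.
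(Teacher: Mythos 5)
Your argument is correct and follows essentially the same route as the paper: the paper's proof likewise rests on the order $p-1\geq 2$ vanishing of $(\Pi\circ\Phi_p)^*\Psi$ at the points $P_k$ together with the local normal form of $F$ near $P_k$, deferring the latter to the local analysis in \cite[\S~3.2]{He3}. You simply carry out that local analysis explicitly (identifying $F$ as $\mathrm{diag}(1,u)$ in the eigenbasis of the residue and conjugating), which is a welcome elaboration rather than a different method.
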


\begin{proof}
Consider the holomorphic Higgs field
\[(\Pi\circ\Phi_p)^*\Psi\,\colon\, S\otimes (L\oplus L^*)\,\longrightarrow\,
K_\Sigma\otimes S\otimes (L\oplus L^*)\]
on the rank two holomorphic bundle $S\otimes (L\oplus L^*).$ It vanishes 
of order $p-1\,\geq\,2$ at the singular points $P_1,\,\cdots ,\,P_4.$
Performing the local analysis (as in \cite[\S~3.2]{He3}) near $P_k$ of the normal form of the
homomorphism $F$ in \eqref{df},
we see that $$\Theta\,=\, F\circ(\Pi\circ\Phi_p)^*\Psi\circ F^{-1}$$ has no
singularities, i.e., it is a holomorphic Higgs field on
the trivial holomorphic vector bundle $(V,\, D'')$.
\end{proof}

\begin{theorem}\label{Main}
There exists a compact Riemann surface $\Sigma$ of genus $g\,>\,1$ with a irreducible holomorphic
connection $\nabla$ on the trivial holomorphic rank two vector bundle
${\mathcal O}^{\oplus 2}_\Sigma$ such that the image of the monodromy homomorphism for $\nabla$ is
contained in $\SL(2,\R)$.
\end{theorem}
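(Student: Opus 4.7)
The plan is to combine Theorem \ref{real-mon-hatT2} with the trivialization of Lemma \ref{trivialmon} and the holomorphicity statement of Lemma \ref{Lemired}. Fix some $k\,\in\,\Z\setminus\{0\}$ and choose an odd integer $p\,\geq\,3$ large enough that $\rho\,:=\,\tfrac{1}{2p}$ lies in the interval $]0,\,\epsilon[$ provided by Theorem \ref{real-mon-hatT2}. Let $\Sigma$ be the compact Riemann surface defined by \eqref{sigmayz}; a Riemann--Hurwitz count on the degree $2p$ totally ramified cover $\Sigma\,\longrightarrow\,{\mathbb C}{\mathbb P}^1$ gives $g(\Sigma)\,=\,2p-1\,>\,1$. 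Apply Theorem \ref{real-mon-hatT2} to obtain $a\,\in\,\C$ such that $\Pi^*\nabla^{a,\chi,\rho}$ is irreducible with monodromy conjugate to a subgroup of $\SL(2,\R)$, and apply Theorem \ref{thetrivcon} to obtain the complementary parameter $a^u\,\in\,\C$. The connections $\nabla^{a,\chi,\rho}$ and $\nabla^{a^u,\chi,\rho}$ induce the same parabolic structure on the Deligne extension $V\,=\,L\oplus L^*$ of Proposition \ref{explicit_coeff}, because the residues produced by \eqref{abel-connection} and \eqref{gammamp} depend only on $\chi$ and $\rho$. As observed in the proof of Theorem \ref{thetrivcon}, the strongly parabolic Higgs fields on this stable parabolic bundle (stable by Proposition \ref{Pro-stab}) form the one dimensional space spanned by $\Psi\,=\,\dvector{dw&0\\0&-dw}$. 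Hence
\[
\nabla^{a,\chi,\rho}-\nabla^{a^u,\chi,\rho}\,=\,t\,\Psi
\]
for a unique $t\,\in\,\C$, and necessarily $t\,\neq\,0$, for otherwise the monodromy would be simultaneously irreducible real and reducible unitary.

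Next, Lemma \ref{trivialmon} produces $(S,\,\nabla^S)$ and the holomorphic bundle map $F$ of \eqref{df} that transforms $\nabla^S\otimes(\Pi\circ\Phi_p)^*\nabla^{a^u,\chi,\rho}$ into the trivial connection $D$ on $(V,\,D)\,=\,(\mathcal O_\Sigma^{\oplus 2},\,d)$. Since tensoring and pulling back act linearly on connections,
\[
\nabla^S\otimes(\Pi\circ\Phi_p)^*\nabla^{a,\chi,\rho}\,=\,\nabla^S\otimes(\Pi\circ\Phi_p)^*\nabla^{a^u,\chi,\rho}+t\,(\Pi\circ\Phi_p)^*\Psi\, .
\]
Conjugating both sides by $F$, using \eqref{Ddef}, and invoking Lemma \ref{Lemired}, I obtain
\[
\nabla\,:=\,F\circ\bigl(\nabla^S\otimes(\Pi\circ\Phi_p)^*\nabla^{a,\chi,\rho}\bigr)\circ F^{-1}\,=\,D+t\,\Theta\, ,
\]
which is a holomorphic (hence flat) $\SL(2,\C)$--connection on the trivial holomorphic bundle $\mathcal O_\Sigma^{\oplus 2}$.

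It remains to verify that $\nabla$ has irreducible monodromy contained in a conjugate of $\SL(2,\R)$. The map $F$ is a gauge equivalence away from the ramification divisor and hence preserves the monodromy representation, so the monodromy of $\nabla$ coincides with that of $\nabla^S\otimes(\Pi\circ\Phi_p)^*\nabla^{a,\chi,\rho}$. Since $\nabla^S$ has $\{\pm 1\}$--valued monodromy and Theorem \ref{real-mon-hatT2} places the monodromy of $\Pi^*\nabla^{a,\chi,\rho}$ in a conjugate of $\SL(2,\R)$, its pullback by $\Phi_p$ and the subsequent scalar twist by $\pm 1$ remain in the same conjugate. For irreducibility, one reads off from \eqref{mmon} that the generators $\widehat\alpha,\,\widehat\beta\,\in\,\pi_1(\widehat T^2\setminus\{p_1,\ldots,p_4\})$ have trivial $\Phi_p$--monodromy and therefore lift to closed loops in $\Sigma\setminus\Phi_p^{-1}\{p_1,\ldots,p_4\}$. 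On these lifts the monodromy of $\nabla$ equals the matrices $h(\widehat\alpha),\,h(\widehat\beta)$ of Theorem \ref{real-mon-hatT2} up to an overall sign; by Lemma \ref{irr}, applicable since $xy\,\neq\,0$ (which holds as $\rho\,\neq\,0$), these two matrices already generate an irreducible subgroup of $\SL(2,\C)$. Irreducibility is preserved under scaling by the $\{\pm 1\}$--representation, so $\nabla$ is irreducible and Theorem \ref{Main} follows.

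The main obstacle I anticipate is hidden in the invocation of Lemma \ref{Lemired}: one must verify that $\Theta\,=\,F\circ(\Pi\circ\Phi_p)^*\Psi\circ F^{-1}$ extends \emph{holomorphically} across the four ramification points $P_1,\,\ldots,\,P_4$. This rests on the explicit local normal form of the singular map $F$ at each $P_i$ (following \cite[\S~3.2]{He3}) and on the fact that $(\Pi\circ\Phi_p)^*\Psi$ vanishes there to order $p-1\,\geq\,2$, which is exactly the order needed to absorb the singularity of $F^{-1}$. Once that local computation is in place, the rest of the proof is a clean assembly of the preceding results.
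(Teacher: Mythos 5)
Your proposal is correct and follows essentially the same route as the paper: Theorem \ref{real-mon-hatT2} for the irreducible real monodromy, Lemma \ref{trivialmon} and \eqref{Ddef} to trivialize $(V,\,D)$, the identity $\nabla^{a,\chi,\rho}-\nabla^{a^u,\chi,\rho}=(a-a^u)\Psi$ together with Lemma \ref{Lemired} to land on the trivial holomorphic bundle, and Lemma \ref{irr} applied to the closed lifts of $\widehat\alpha,\,\widehat\beta$ for irreducibility. You merely make explicit a few steps the paper leaves implicit (the coefficient $t=a-a^u$, the genus count, and why the lifts of $\widehat\alpha,\,\widehat\beta$ are closed), so no further comparison is needed.
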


\begin{proof}
For $\rho\,=\,\tfrac{1}{2p}$, with $p$ being an odd
integer, consider the connection $\nabla^{a,\chi,\rho}$, over rank two vector bundle on $T^2$,
given by Theorem \ref{real-mon-hatT2}. Since the image of the monodromy homomorphism
for $\Pi^*\nabla^{a,\chi,\rho}$ is conjugate to a subgroup of
$\SL(2,\R)$, and $\nabla^S$ has $\Z/2\Z$--monodromy, 
the image of the monodromy homomorphism for the connection
\[\nabla^S\otimes(\Pi\circ\Phi_p)^*\nabla^{a,\chi,\rho}\]
can be conjugated into $\SL(2,\R)$ as well. The same holds for the connection
\[\nabla\,:=\,F\circ(\nabla^S\otimes(\Pi\circ\Phi_p)^*\nabla^{a,\chi,\rho}) \circ F^{-1}\]
because $F$ is a (singular) gauge transformation. From Lemma \ref{Lemired} we know that 
$\nabla-D$ is a holomorphic Higgs field on the trivial holomorphic vector bundle $(V,\,D''),$
where $D$ is the trivial connection in \eqref{Ddef}.

It remains to show that the monodromy homomorphism for $\nabla$ is an irreducible
representation of the fundamental group. Since
$\rho\,\neq\,0$ is small, this follows from Lemma \ref{irr}. Indeed, observe
that there exists $\widetilde{\alpha},\, \widetilde{\beta}\,\in\,\pi_1(\Sigma,\,q)$
(see Figure \ref{figure3})
along which the monodromies of $\nabla$ are given by
\[h(\alpha)h(\alpha) \ \ \text{ and }\ \ h(\beta)h(\beta)\]
up to a possible sign. For example, representatives of $\widetilde{\alpha},\, \widetilde{\beta}$
are given by a connected component of the preimage of $\widehat{\alpha}$ and $\widehat{\beta}$ respectively.
Because $xy\,\neq\, 0$, in view of \eqref{T1eq} and \eqref{T2eq}
and continuity in $\rho$, the monodromy representation must
be irreducible by Lemma \ref{irr}.
\end{proof}

\section{Figures}

\begin{figure}[h]
\centering
\begin{subfigure}[t]{0.25\textwidth}
\centering
\includegraphics[width=\textwidth]{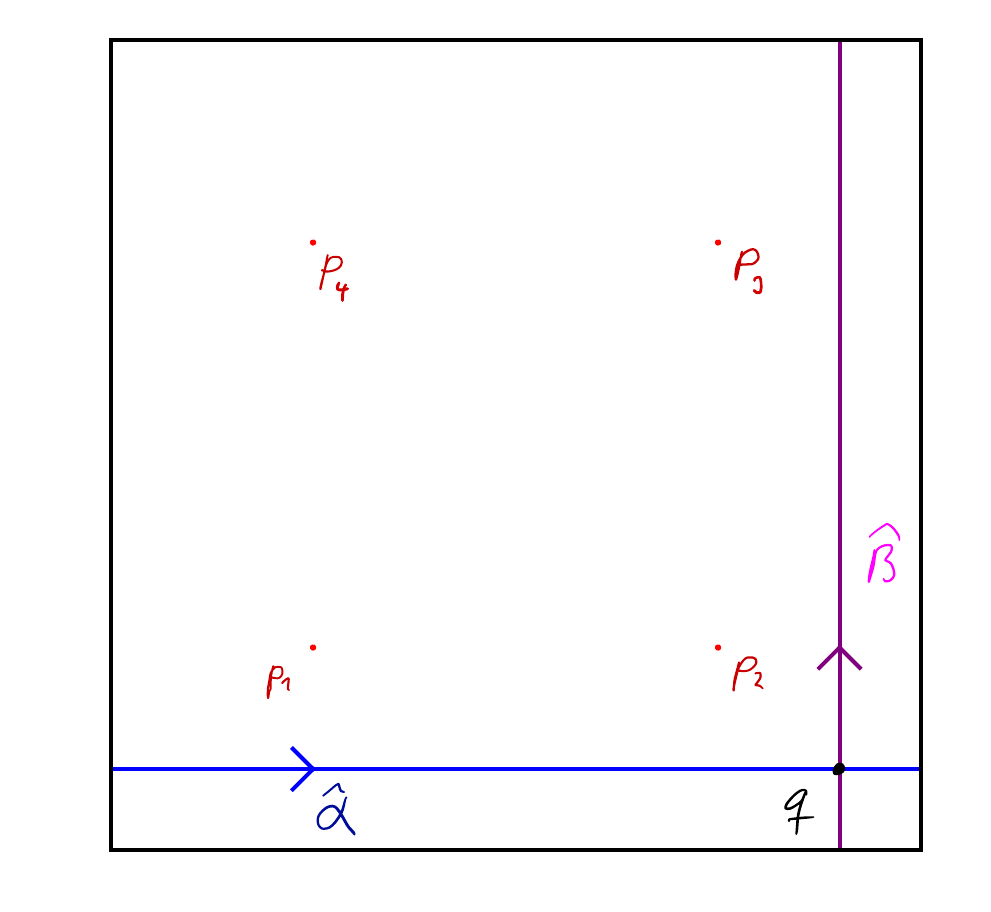}
\caption{
\small
The 4-punctured torus.
}
\label{figure1}
\end{subfigure}
\begin{subfigure}[t]{0.375\textwidth}
\centering
\includegraphics[width=\textwidth]{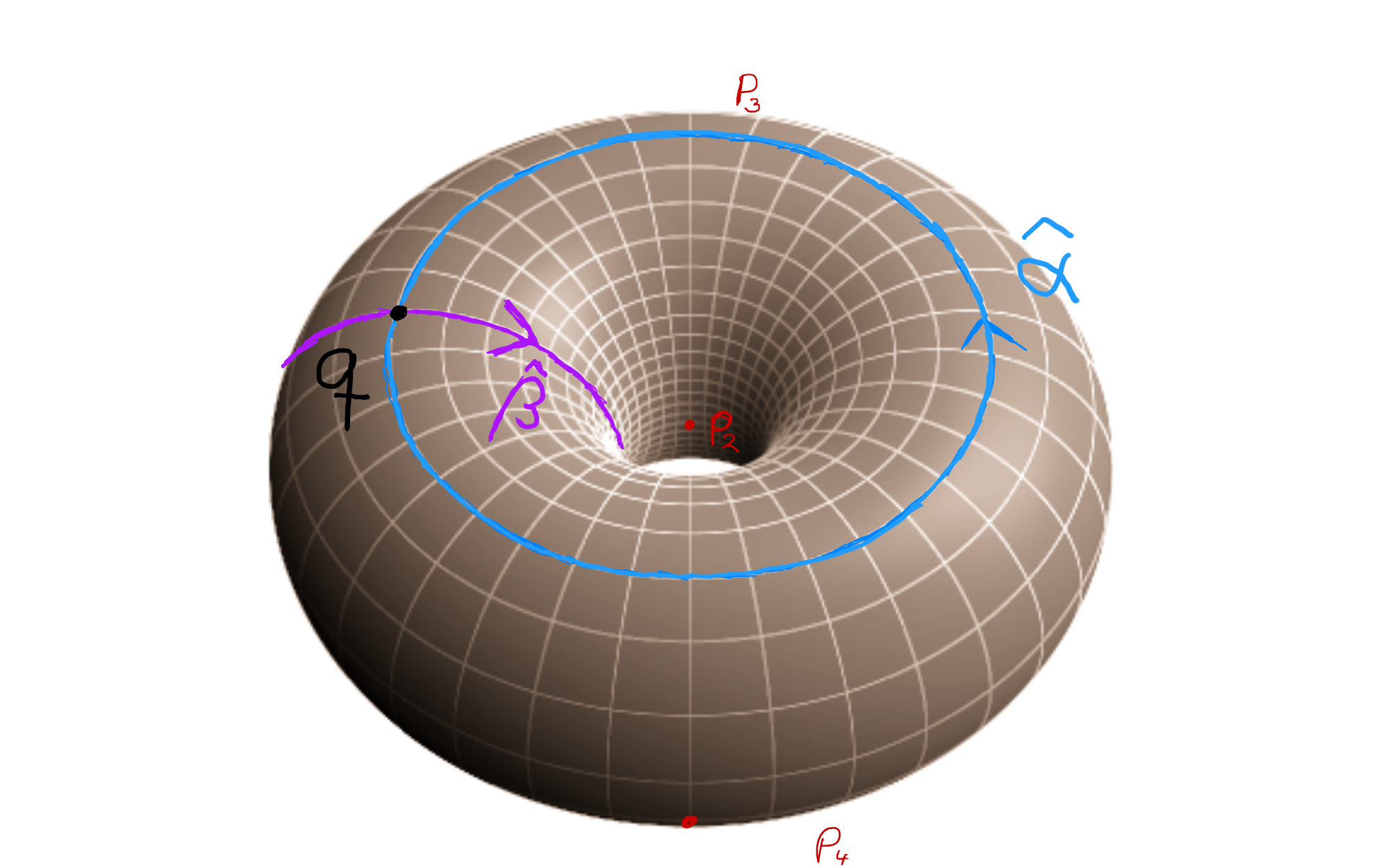}
\caption{\small
A second view of the 4-punctured torus.
}
\label{figure2}
\end{subfigure}
\begin{subfigure}[t]{0.275\textwidth}
\centering
\includegraphics[width=\textwidth]{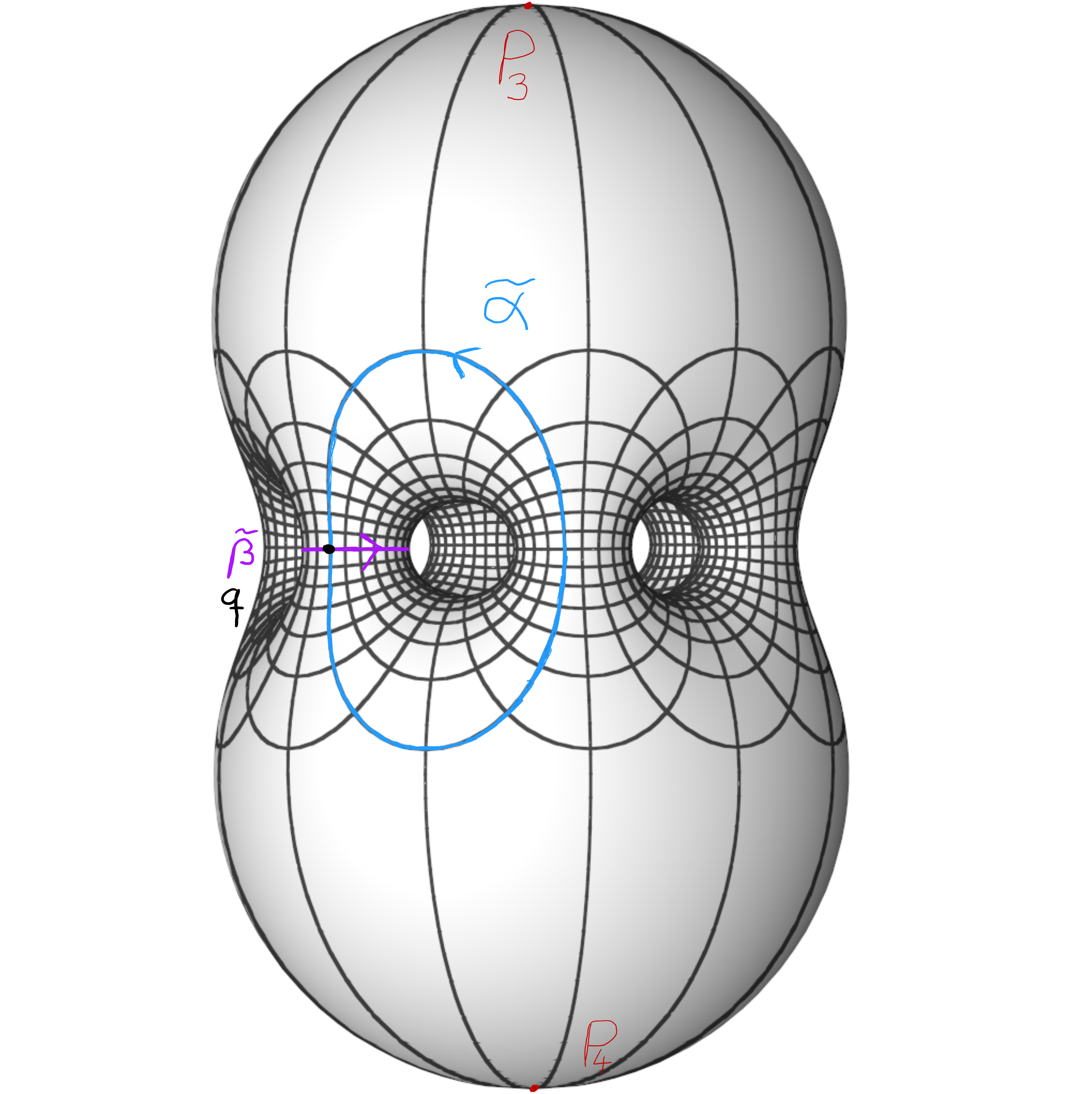}
\caption{\small
The Riemann surface $\Sigma$ for $p= 3$, shown with vertical and horizontal trajectories of
$(\Pi\circ\Phi_3)^*(dw)^2$. Picture by Nick Schmitt.
}
\label{figure3}
\end{subfigure}
\caption{}
\end{figure}

\section*{Acknowledgements}

We are grateful to the referees for helpful comments.
This work has been supported by the French government through the UCAJEDI Investments in the
Future project managed by the National Research Agency (ANR) with the reference number
ANR2152IDEX201. The first-named author is partially supported by a J. C. Bose Fellowship, and
school of mathematics, TIFR, is supported by 12-R$\&$D-TFR-5.01-0500.

\end{document}